\documentclass{siamart250211}
\usepackage{booktabs}
\usepackage{multirow}



\usepackage{lipsum}
\usepackage{amsfonts}
\usepackage{graphicx}
\usepackage{epstopdf}
\usepackage{algorithmic}
\ifpdf
  \DeclareGraphicsExtensions{.eps,.pdf,.png,.jpg}
\else
  \DeclareGraphicsExtensions{.eps}
\fi


\newsiamremark{remark}{Remark}
\newsiamremark{hypothesis}{Hypothesis}
\crefname{hypothesis}{Hypothesis}{Hypotheses}
\newsiamthm{claim}{Claim}
\newsiamremark{fact}{Fact}
\crefname{fact}{Fact}{Facts}

\headers{Parametric Probabilistic Manifold Decomposition}{J. Guo, and D. Xiao}

\title{Parametric Probabilistic Manifold Decomposition for Nonlinear Model Reduction\thanks{Submitted to the editors DATE.
\funding{The authors acknowledge the support of the Fundamental Research Funds for the Central Universities, the Top Discipline Plan of Shanghai Universities-Class I and Shanghai Municipal Science and Technology Major Project (No. 2021SHZDZX0100), National Key $R\&D$ Program of China(NO.2022YFE0208000, 2024YFC2816400 and 2024YFC2816401).}}}


\author{Jiaming Guo\thanks{Shanghai Research Institute for Intelligent Autonomous Systems, Tongji University, Shanghai 201210, CHINA. 
 (\email{2411955@tongji.edu.cn}).}
\and Dunhui Xiao\thanks{School of Mathematical Sciences,
Key Laboratory of Intelligent Computing and Applications (Ministry of Education), Tongji University, Shanghai 200092, CHINA. 
  (\email{xiaodunhui@tongji.edu.cn}).}
}

\usepackage{amsopn}


\ifpdf
\hypersetup{
  pdftitle={Parametric Probabilistic Manifold Decomposition for Nonlinear Model Reduction},
  pdfauthor={Jiaming Guo, and Dunhui Xiao}
}
\fi




\begin{document}

\maketitle

\begin{abstract} 
Probabilistic Manifold Decomposition (PMD)\cite{doi:10.1137/25M1738863}, developed in our earlier work, provides a  nonlinear model reduction by embedding high-dimensional dynamics onto low-dimensional probabilistic manifolds. The PMD has demonstrated strong performance for time-dependent systems. However, its formulation is for temporal dynamics and does not directly accommodate parametric variability, which limits its applicability to tasks such as design optimization, control, and uncertainty quantification.
In order to address the limitations, a \emph{Parametric Probabilistic Manifold Decomposition} (PPMD) is presented to deal with parametric problems. The central advantage of PPMD is its ability to construct continuous, high-fidelity parametric surrogates while retaining the transparency and non-intrusive workflow of PMD. By integrating probabilistic-manifold embeddings with parameter-aware latent learning, PPMD enables smooth predictions across unseen parameter values (such as different boundary or initial conditions). To validate the proposed method, a comprehensive convergence analysis is established for PPMD, covering the approximation of the linear principal subspace, the geometric recovery of the nonlinear solution manifold, and the statistical consistency of the kernel ridge regression used for latent learning. The framework is then numerically demonstrated on two classic flow configurations: flow past a cylinder and backward-facing step flow. Results confirm that PPMD achieves superior accuracy and generalization beyond the training parameter range compared to the conventional proper orthogonal decomposition with Gaussian process regression (POD+GPR) method.


\end{abstract}

\begin{keywords}
Probabilistic Manifold Decomposition; Parametric ROM; nonlinear model reduction
\end{keywords}

\begin{MSCcodes}
37M05
\end{MSCcodes}

\section{Introduction}
In computational engineering, the high-fidelity simulations of physical systems are inherently expensive because the discretization of the governing partial differential equations yields extremely large state spaces with millions or billions of degrees of freedom (DoF) \cite{diez2021nonlinear}. These high-dimensional models become prohibitively costly in many-query scenarios such as optimization, uncertainty quantification, or real-time control, where solutions must be computed for numerous parameter configurations.

Fortunately, the intrinsic structure of these systems often enables a significant reduction in computational complexity. In practice, the solution set typically resides on a low-dimensional manifold embedded in the high-dimensional state space, governed by a relatively small number of latent variables. This observation provides the theoretical foundation for reduced-order modeling (ROM)—a methodology that projects the governing equations onto a much smaller subspace while preserving the essential physical dynamics.

Extending ROM to parametric problems introduces the need for parametric ROM (pROM), which captures not only the system’s dynamics but also its dependence on physical, geometric, or operational parameters. A well-constructed pROM provides a computationally efficient surrogate capable of approximating the full-order model across the parameter space. This enables rapid exploration of design spaces, parameter estimation, and robust optimization—all with dramatically reduced computational overhead.

Thus, by exploiting the low-dimensional structure embedded in high-fidelity data, parametric ROMs bridge the gap between accuracy and efficiency, making advanced computational tasks feasible in engineering practice\cite{benner2015survey,baur2011interpolatory}.

Classical projection-based ROMs (proper orthogonal decomposition (POD), balanced truncation, reduced-basis, and rational or interpolatory methods) provide efficient linear-subspace approximations and admit rigorous error measures under favorable conditions \cite{berkooz1993proper,heinkenschloss2008balanced,peterson1989reduced,baur2011interpolatory,benner2015survey, mlinaric2023unifying}. However, restricting the state to evolve within a linear subspace imposes an intrinsic limitation for strongly nonlinear problems \cite{mcquarrie2023nonintrusive,schlegel2015long,osth2014need,chaturantabut2010nonlinear}. In parametric applications, this limitation manifests as poor generalization across different parameter values: a global linear basis requires a larger number of modes to achieve acceptable accuracy, which increases computational savings and complicates the offline and online decomposition \cite{benner2015survey,bui2008model,mlinaric2023optimal}.

To mitigate the cost of nonlinear operators while retaining projection structure, operator-level accelerations such as the empirical interpolation method (EIM), discrete empirical interpolation (DEIM), residual-DEIM, and Gauss–Newton with approximated tensors (GNAT) have been developed \cite{barrault2004empirical,peherstorfer2014localized,xiao2014non,carlberg2013gnat}. These methods reduce the expense of evaluating nonlinear terms and enable affine-like online complexity; nevertheless, they do not eliminate the fundamental geometric limitation of linear-subspace ROMs. In particular, interpolating or approximating nonlinear operators addresses computational bottlenecks but does not guarantee that the reduced coordinates form a faithful, low-dimensional parametrization of a curved solution manifold across the parameter domain, especially for nonaffine parameter dependence or when parametric effects interact nonlinearly with spatial dynamics \cite{benner2015survey,baur2011interpolatory}.

An alternative route is nonlinear manifold learning (Isomap, LLE, diffusion maps, autoencoders, and related techniques), which seeks nonlinear coordinate charts that are better aligned with the intrinsic geometry of the solution set \cite{balasubramanian2002isomap,roweis2000nonlinear,coifman2006diffusion,lee2020model,zhang2004principal}. These techniques can yield more compact representations for strongly nonlinear problems and have been combined with regression or operator-learning methods to produce parametric surrogates \cite{xie2018data}. Yet, the practical deployment of nonlinear embeddings in scalable pROM pipelines faces multiple obstacles. First, preserving global dynamical consistency and fine-scale local structure across the entire parameter domain remains challenging, especially when latent representations are learned purely from data without explicit connection to the underlying equations\cite{zhang2015efficient}. Second, many nonlinear embedding schemes (e.g. autoencoders) are sensitive to hyperparameter choices, sampling density, and network architecture, which can lead to instability, poor extrapolation outside the training set, or latent degeneracies \cite{fu2023non}. Such sensitivity often requires problem-specific tuning and extensive offline training, undermining robustness in many-query parametric settings \cite{benner2015survey,bui2008model}. Third, constructing reliable offline/online splits that support rapid online evaluation consistently across parameter values is difficult when nonlinear manifolds lack affine or other exploitable structures, and adaptive or hyper-reduction strategies may be required to maintain efficiency \cite{pagliantini2023gradient}. Moreover, the computational cost of training and deploying large nonlinear models can be high for large datasets or high-dimensional parameter spaces, even when compared to projection-based alternatives \cite{baur2011interpolatory}. In particular, recent studies on nonlinear reduced-order modeling highlight that deep, convolutional, or manifold-based embeddings can incur significant offline costs and may still require physics-aware regularization to maintain interpretability and extrapolation quality \cite{higham2019deep,solomon1979connection,absil2012projection}, which collectively inhibit the straightforward replacement of projection-based pROMs with black-box nonlinear embeddings in many engineering applications.

Probabilistic Manifold Decomposition (PMD) \cite{doi:10.1137/25M1738863} provides a principled, interpretable nonlinear model reduction method that combines linear reduction with a probabilistic manifold representation of nonlinear residuals built from Markov processes and geodesic-aware similarity graphs. However, it is inherently formulated for time-dependent problems and is not suitable for parameter-dependent settings.

Thus, \emph{Parametric Probabilistic Manifold Decomposition} (PPMD) is proposed, which extends PMD to parameterized dynamical systems. PPMD assembles a unified snapshot matrix by concatenating time-resolved trajectories across parameter samples, then decomposes the data into a dominant linear subspace (energetic modes) and a nonlinear residual embedded on a low-dimensional probabilistic manifold. For continuous, accurate prediction across parameters, PPMD uses recursive, adaptive kernel ridge regression to synthesize additional latent samples and improve local parametric coverage, converting these latent sequences into smooth parameters and obtaining latent maps via weighted smoothing-spline interpolation for high-fidelity reconstruction. The resulting method retains PMD’s transparency and operator structure while yielding uncertainty-aware, geometry-respecting parametric surrogates with explicit, interpretable error sources (linear truncation, manifold embedding, regression, and smoothing).

The performance of PPMD is demonstrated on two classical benchmark fluid problems: unsteady flow past a cylinder and backward-facing step flow. Comparisons with a traditional POD combined with Gaussian Process Regression (GPR) ROM illustrate that PPMD delivers significantly improved accuracy, smoother parametric generalization, and higher robustness. 

The remainder of the paper is organized as follows. Section~\ref{sec:govern} presents the governing equations. Section~\ref{sec:review_pmd} provides a brief description of the PMD method. Section~\ref{sec:method} details the PPMD methodology, including linear reduction, probabilistic manifold construction, recursive latent learning, and continuous parameter mapping. Section~\ref{sec:ppmd_convergence} develops the theoretical analysis of PPMD, providing convergence guaranties and error decomposition. Section~\ref{sec:experiments} demonstrates numerical experiments for parametric CFD problems. Finally, conclusions and future work are provided in Section~\ref{sec:summary}.

\section{Governing equations} \label{sec:govern}

The physical system considered in this work is the three dimensional (3D) non-hydrostatic and incompressible Navier-Stokes equations, which describe the conservation laws of fluid motion. The mass conservation (continuity) equation is given by:
\begin{equation}\label{eq:continuity}
\nabla \cdot \mathbf{u} = 0,
\end{equation}
which enforces the incompressibility condition, ensuring that the divergence of the velocity field $\mathbf{u}$ vanishes at every point in space. This constraint implies the conservation of mass in the absence of sources or sinks. This condition implies that the fluid is incompressible, with no local volume change in the flow field.

The momentum balance is described by:
\begin{equation}\label{eq:equation2}
\frac{\partial \mathbf{u}}{\partial t} + (\mathbf{u} \cdot \nabla) \mathbf{u} + f , \mathbf{k} \times \mathbf{u} = -\nabla p + \nabla \cdot \boldsymbol{\tau},
\end{equation}
where the left-hand side represents the temporal and convective acceleration of the fluid, along with Coriolis effects due to system rotation, where $f$ is the Coriolis parameter and $\mathbf{k}$ is the unit vector aligned with the rotation axis.

On the right-hand side, $p := p / \rho_0$ is the rescaled pressure term, and $\boldsymbol{\tau}$ denotes the viscous stress tensor, defined as:
\begin{equation}
\boldsymbol{\tau} := \mu \left( \nabla \mathbf{u} + (\nabla \mathbf{u})^T \right),
\end{equation}
with $\mu$ being the dynamic viscosity of the fluid. This term accounts for internal viscous forces due to velocity gradients.

These equations form the high-fidelity governing model used for simulation. The proposed PPMD framework seeks to efficiently model and predict such parameter-dependent dynamics in a reduced-order setting.

\section{Probabilistic Manifold Decomposition (PMD)}
\label{sec:review_pmd}

PMD is a recently presented nonlinear model reduction method \cite{doi:10.1137/25M1738863}. PMD separates dominant linear dynamics from nonlinear residuals, embeds the latter on a low-dimensional manifold via Markov diffusion geometry, and applies reduced-order predictors in both latent spaces. This section provides a concise summary of the PMD workflow, emphasizing the mathematical structure relevant to its parametric extension.

\subsection{Linear feature extraction and residual separation}
Given a snapshot matrix $\mathcal{U}\in\mathbb{R}^{n\times m}$, standardized as
\begin{equation}
\bar{\mathcal{U}}=\left[\frac{\mathbf{u}_1-\mu}{\sigma},\dots,\frac{\mathbf{u}_m-\mu}{\sigma}\right],
\end{equation}
PMD applies a truncated singular value decomposition (SVD)
\begin{equation}
\bar{\mathcal{U}}=\Psi\Lambda\Theta^\top,
\end{equation}
retaining the smallest $r$, such that
\begin{equation}
\sum_{i=1}^r \lambda_i^2 \ge (1-\varepsilon)\sum_{j=1}^m \lambda_j^2.
\end{equation}
The linear latent coordinates and corresponding approximation are
\begin{equation}
\mathcal{Z}=\Lambda_r\Theta_r^\top,\qquad 
\bar{\mathcal{U}}_r=\Psi_r\Lambda_r\Theta_r^\top,
\end{equation}
and the nonlinear residuals are
\begin{equation}
\Xi
=(I-\Theta_r\Theta_r^\top)\bar{\mathcal{U}}
\in\mathbb{R}^{n\times m}.
\end{equation}

\subsection{Probabilistic manifold construction}
Given the residual samples $\Xi=[\mathbf{r}_1,\dots,\mathbf{r}_m]$, PMD constructs a weighted graph with Gaussian kernel
\begin{equation}
\mathcal{A}(i,j)
=\exp\!\left(-\frac{\|\mathbf{r}_i-\mathbf{r}_j\|^2}{\varepsilon^2}\right).
\end{equation}
Geodesic distances $\rho_g(\mathbf{r}_i,\mathbf{r}_j)$ are computed using Floyd–Warshall updates:
\begin{equation}
\rho_g^{(k)}(i,j)
=\min\big(\rho_g^{(k-1)}(i,j),\;\rho_g^{(k-1)}(i,k)+\rho_g^{(k-1)}(k,j)\big).
\end{equation}

A Markov transition matrix is defined by row-normalization,
\begin{equation}
\mathcal{P}(i,j)=\frac{\mathcal{A}(i,j)}{\sum_k \mathcal{A}(i,k)},
\qquad 
\tilde{\mathcal{P}}=\mathcal{P}^t.
\end{equation}
Spectral decomposition
\begin{equation}\label{basis}
\tilde{\mathcal{P}}\varphi_k=\lambda_k\varphi_k,
\end{equation}
produces the nonlinear embedding
\begin{equation}
\Phi(\mathbf{r}_i)
=
\begin{bmatrix}
\lambda_1^t \varphi_1(i)\\[-2pt]
\vdots\\[-2pt]
\lambda_r^t \varphi_r(i)
\end{bmatrix},
\qquad 
\Phi=[\Phi(\mathbf{r}_1),\dots,\Phi(\mathbf{r}_m)]\in\mathbb{R}^{r\times m}.
\end{equation}

\subsection{Dynamic prediction in latent spaces}

After dimensionality reduction, PMD models the temporal evolution of the system separately for the linear and nonlinear manifolds.

\subsubsection{Linear latent dynamics}  
Given $Z=\mathcal{Z}$, PMD fits a regularized linear operator
\begin{equation}
A_{\mathrm{lin}}
=Z^{(2)} Z^{(1)\top}
\left(Z^{(1)} Z^{(1)\top}+\lambda I\right)^{-1},
\end{equation}
with $Z^{(1)}=[z_1,\dots,z_{m-1}]$, $Z^{(2)}=[z_2,\dots,z_m]$, and predicts recursively
\begin{equation}
z_{k}=A_{\mathrm{lin}}^k z_0.
\end{equation}

This prediction step is fully explicit, computationally inexpensive, and preserves the dominant linear structure learned by the SVD.

\subsubsection{Nonlinear dynamics on the probabilistic manifold}
After predicting the linear components, PMD models the evolution of nonlinear residuals on the probabilistic manifold. Let $\Phi = [\boldsymbol{\phi}_1, \dots, \boldsymbol{\phi}_m] \in \mathbb{R}^{r \times m}$ denote the low-dimensional embeddings, with each $\boldsymbol{\phi}_i = \Phi(\mathbf{r}_i)$. A Gaussian kernel based on geodesic distance $\rho_g$ defines similarity: 
\begin{equation}
K(\boldsymbol{\phi}_i, \boldsymbol{\phi}_j) = \exp\left( -\frac{\rho_g(\boldsymbol{\phi}_i, \boldsymbol{\phi}_i)^2}{\varepsilon^2} \right),
\end{equation}
normalized to a Markov matrix:
\begin{equation}
\Pi = \mathcal{D}^{-1} \mathcal{K}, \quad \mathcal{D}_{ii} = \sum_{j=1}^m \mathcal{K}(\boldsymbol{\phi}_i, \boldsymbol{\phi}_j).
\end{equation}
Eigen-decomposition $\Pi \mathbf{u}_j = \zeta_j \mathbf{u}_j$ yields harmonics $1 = \zeta_1 \ge \dots \ge \zeta_m \ge 0$. Temporal evolution is approximated via ridge regression on shifted embeddings
\begin{equation}
\Phi_1 = [\boldsymbol{\phi}_1, \dots, \boldsymbol{\phi}_{m-1}], \quad
\Phi_2 = [\boldsymbol{\phi}_2, \dots, \boldsymbol{\phi}_m], \quad
\Omega = \Phi_2 \Phi_1^\top (\Phi_1 \Phi_1^\top + \lambda I)^{-1}.
\end{equation}
Rows $\boldsymbol{\omega}_i$ of $\Omega$ are projected onto $\{\mathbf{u}_j\}$:
\begin{equation}
d_j = \begin{bmatrix} \langle \boldsymbol{\omega}_1, \mathbf{u}_j \rangle \\ \vdots \\ \langle \boldsymbol{\omega}_r, \mathbf{u}_j \rangle \end{bmatrix} \in \mathbb{R}^r.
\end{equation}
For a new point $\boldsymbol{\phi}_{\text{new}}$, the harmonics are interpolated:
\begin{equation}
U_j(\boldsymbol{\phi}_{\text{new}}) = \frac{1}{\zeta_j} \sum_{i=1}^m \mathcal{K}(\boldsymbol{\phi}_{\text{new}}, \boldsymbol{\phi}_i)\, \mathbf{u}_j[i],
\end{equation}
yielding the nonlinear prediction:
\begin{equation}
\boldsymbol{\phi}_{\text{new}} = \sum_{j=1}^r d_j U_j(\boldsymbol{\phi}_{\text{new}}).
\end{equation}
This approach combines spectral harmonics with manifold-aware interpolation for accurate nonlinear dynamics prediction.

\subsection{Reconstruction via lifting}
Given the predicted latent coordinates $z(t)$ and $\phi(t)$, PMD reconstructs the high-dimensional state using an additive nonlinear lifting:
\begin{equation}
\hat{u}(t)
=\Psi_r z(t) + \mathcal{K}\phi(t),
\end{equation}
where $\Psi_r$ contains the retained SVD modes and $\mathcal{K}$ reconstructs nonlinear residuals. The lifting map $\mathcal{K}$ is obtained by
\begin{equation}
\mathcal{K}=\arg\min_{\mathcal{K}}
\|\mathcal{K}\Phi-\Xi\|_F^2+\lambda\|\mathcal{K}\|_F^2.
\end{equation}

PMD thus yields a fully explicit, interpretable surrogate that couples linear SVD modes with a probabilistic nonlinear manifold, enabling efficient reduced-order prediction for strongly nonlinear dynamical systems.

\section{Parametric probabilistic manifold decomposition(PPMD)}\label{sec:method}


In this section, an extension of PMD is proposed to parameter-dependent systems, referred to as \textit{Parametric PMD} (PPMD). The objective of PPMD is to learn a reduced-order model that generalizes across a continuous range of parameter values, enabling efficient prediction for unseen parameter configurations. Several critical modifications are introduced to PPMD in the prediction stage to accommodate parametric variation. A kernel ridge regression framework with dynamically adjusted polynomial kernels and recursive augmentation strategies is adopted to enhance generalization and stability.

Importantly, PPMD overcomes the discretization constraints of classical PMD by incorporating a continuous-parameter prediction mechanism based on cubic smoothing splines, which provides smooth reconstructions across the entire parameter domain and enables evaluation at arbitrary parameter values within the training range. This approach not only supports multi-parameter generalization but also ensures smooth and accurate predictions throughout the parameter space.

\subsection{Snapshot Matrix Construction}

Unlike traditional PMD, which constructs the snapshot matrix using time-discretized fields under a fixed parameter setting, the PPMD framework constructs its snapshot matrix by varying the physical or design parameters while aggregating full time-resolved trajectories.

Let \( \{ \mu_1, \dots, \mu_{n_s} \} \subset \mathbb{R}^p \) denote a collection of \( n_s \) parameter samples. For each parameter \( \mu_i \), suppose the high-fidelity simulation yields a trajectory of state vectors \( \mathbf{u}^{(i)}(t_j) \in \mathbb{R}^{n} \) at \( m \) time steps \( \{ t_1, \dots, t_m \} \), where \( n \) is the number of spatial degrees of freedom. The time series for each parameter are concatenated into a single vector:
\begin{equation}
\mathbf{u}(\mu_i) = 
\begin{bmatrix}
\mathbf{u}^{(i)}(t_1) \\
\mathbf{u}^{(i)}(t_2) \\
\vdots \\
\mathbf{u}^{(i)}(t_m)
\end{bmatrix}
\in \mathbb{R}^{N}, \quad \text{where } N = n \cdot m.
\end{equation}

The full parametric snapshot matrix is then constructed by concatenating these trajectories across different parameters:
\begin{equation}
U = 
\begin{bmatrix}
\mathbf{u}(\mu_1) & \mathbf{u}(\mu_2) & \cdots & \mathbf{u}(\mu_{n_s})
\end{bmatrix}
\in \mathbb{R}^{N \times n_s}.
\end{equation}  				

Each column of \( U \) corresponds to a complete spatiotemporal trajectory at a particular parameter value \( \mu_i \). This construction enables PPMD to simultaneously exploit temporal coherence and parametric variation in the system response.

\subsection{Dimensionality Reduction and Manifold Embedding} \label{PPMDnonlinear}

A truncated singular value decomposition is applied to the normalized snapshot matrix:
\begin{equation}
\bar{U}_p = \Psi_p \Lambda_p \Theta_p^T,
\end{equation}
where \( \bar{U}_p \in \mathbb{R}^{N \times n_s} \) denotes the column-wise normalized snapshot matrix obtained from \( U \), and \( \Psi_p \in \mathbb{R}^{N \times r^p} \), \( \Lambda_p \in \mathbb{R}^{r^p \times r^p} \), and \( \Theta_p \in \mathbb{R}^{n_s \times r^p} \) contain the leading left singular vectors, singular values, and right singular vectors, respectively.

The corresponding rank-\( r^p \) linear approximation of the normalized snapshot matrix is given by:
\begin{equation}\label{parametric linear}
\bar{U}_p^{(r^p)} = \Psi_p^{(r^p)} \Lambda_p^{(r^p)} {\Theta_p^{(r^p)}}^T \in \mathbb{R}^{N \times n_s},
\end{equation}
where $\Psi_p^{(r^p)}\in\mathbb{R}^{N \times r}$, $\Lambda_p^{(r^p)}\in\mathbb{R}^{r^p \times r^p}$, and $\Theta_p^{(r^p)}\in\mathbb{R}^{n_s \times r^p}$ are truncated matrices containing the first $r^p$ components.

The dominant linear features are retained in the low-dimensional coordinate matrix:
\begin{equation}
Z_p =  \Lambda_p^{(r^p)} {\Theta_p^{(r^p)}}^T \in \mathbb{R}^{r^p \times n_s}.
\end{equation}

The residual matrix that encodes nonlinear components is computed as:
\begin{equation} \label{residual}
R_p = \bar{U}_p - \bar{U}_p^{(r^p)} = \bar{U}_p - \Theta_p^{(r^p)} {\Theta_p^{(r^p)}}^T \bar{U}_p \in \mathbb{R}^{N \times n_s},
\end{equation}
which projects the data onto the orthogonal complement of the linear subspace.

To extract nonlinear features, the residual matrix \( R_p \) is embedded into a probabilistic manifold as Section~\ref{sec:review_pmd} shown. A weighted graph is first built by defining a Gaussian affinity matrix:
\begin{equation}
\mathcal{A}_p(i, j) = \exp\left( -\frac{\|\mathbf{p}_i - \mathbf{p}_j\|^2}{\varepsilon_p^2} \right), \quad i,j = 1, \dots, n_s,
\end{equation}
where \( \varepsilon_p > 0 \) is a kernel bandwidth. To better reflect manifold geometry, geodesic distances \( \|\mathbf{p}_i - \mathbf{p}_j\| \) are approximated using Floyd–Warshall's algorithm over the \( k_p \)-nearest neighbor graph.

The resulting affinity matrix is normalized row-wise to construct one-step Markov transition matrix:
\begin{equation}
\mathcal{P}_p(i, j) = \frac{\mathcal{A}_p(i, j)}{\sum_{k=1}^{n_s} \mathcal{A}_p(i, k)}.
\end{equation}
To incorporate multi-step transfer behavior, the matrix is raised to the \( t \)-th power:
\begin{equation}
\tilde{\mathcal{P}}_p = \mathcal{P}_p^t.
\end{equation}

A spectral embedding is then obtained by computing the leading eigenpairs:
\begin{equation}
\tilde{\mathcal{P}}_p \varphi^{(p)}_k = \lambda^{(p)}_k \varphi^{(p)}_k, \quad k = 1, \dots, r^p,
\end{equation}
where \( \lambda^{(p)}_1 \geq \lambda^{(p)}_2 \geq \dots \) and \( \varphi^{(p)}_k \in \mathbb{R}^{n_s} \) denote the right eigenvectors. The first trivial eigenvector (constant) is discarded. The embedding of parameter sample \( \mu_i \) is given by:
\begin{equation}
\boldsymbol{\phi}^{(p)}_i =
\begin{bmatrix}
{\lambda_1^{(p)}}^t \varphi^{(p)}_1(i) \\
{\lambda_2^{(p)}}^t \varphi^{(p)}_2(i) \\
\vdots \\
{\lambda_{r^p}^{(p)}}^t \varphi_{r^p}^{(p)}(i)
\end{bmatrix} \in \mathbb{R}^{r^p}.
\end{equation}

Stacking all \( n_s \) embeddings yields the nonlinear coordinate matrix:
\begin{equation}\label{reductionresidual}
\Phi_p =
\begin{bmatrix}
\boldsymbol{\phi}_1 & \boldsymbol{\phi}_2 & \cdots & \boldsymbol{\phi}_{n_s}
\end{bmatrix}
\in \mathbb{R}^{r^p \times n_s}.
\end{equation}

The resulting low-dimensional representation \( \Phi_p \in \mathbb{R}^{r^p \times n_s} \) captures the intrinsic nonlinear structure of the parameter-dependent residuals and enables efficient learning of parametric mappings for downstream prediction and reconstruction tasks in PPMD.

\subsection{Parametric Prediction in Latent Space} \label{sec:parametric_prediction}

In contrast to the original PMD framework, which focuses on time-stepping predictions over fixed temporal snapshots, PPMD aims to generalize across varying physical parameters. Therefore, instead of predicting the system evolution over time, the core task in PPMD is to predict latent representations corresponding to unseen parameter values \( \mu^* \in \mathbb{R}^p \). This change in predictive objective necessitates a different modeling strategy that accounts for nonlocal, nonlinear dependence on parameters.

To enable parametric prediction, a unified framework based on dynamic kernel ridge regression (KRR) is proposed, which learns mappings from parameter values to both linear and nonlinear latent coordinates—\( Z_p \in \mathbb{R}^{r^p} \) and \( \Phi_p \in \mathbb{R}^{r^p} \), respectively. Departing from the static single step KRR used in PMD, PPMD introduces a recursive prediction strategy with dynamic kernel updates, enabling accurate predictions at new parameter points and exhibiting strong generalization even for unseen parametric configurations.

For the latent matrix \( Z_p \in \mathbb{R}^{r^p \times n_s} \), which encodes the linear features across parameter samples, two sequential partitions are defined for recursive training:
\begin{equation}
Z_p^{(1:n_s-1)} = [\mathbf{z}_1, \dots, \mathbf{z}_{n_s-1}] \in \mathbb{R}^{r^p \times (n_s - 1)}, \quad
Z_p^{(2:n_s)} = [\mathbf{z}_2, \dots, \mathbf{z}_{n_s}] \in \mathbb{R}^{r^p \times (n_s - 1)}.
\end{equation}

A kernel ridge regression model is trained to learn the transition relationship from one parameter state to the next.
\begin{equation}
\mathcal{K}_p = \arg\min_{\mathcal{K}_p \in \mathbb{R}^{(r^p) \times (n_s - 1)}} 
\left\| \mathcal{K}_p \, \kappa(Z_p^{(1:n_s-1)}) - Z_p^{(2:n_s)} \right\|_F^2 
+ \lambda_p \| \mathcal{K}_p \|_F^2,
\end{equation}
where the kernel matrix is computed using the polynomial kernel:
\begin{equation}
\kappa(Z_p^{(1:n_s-1)}) = ({Z_p^{(1:n_s-1)}}^\top Z_p^{(1:n_s-1)} + c_p)^{d_p},
\end{equation}
with tunable hyperparameters: polynomial degree \( d_p \), offset \( c_p \), and regularization coefficient \( \lambda_p \).

Using the learned mapping \( \mathcal{K}_p \), the predicted latent vector at the next parameter value \( \mu_{n_s+1} \) is obtained as:
\begin{equation}
\hat{\mathbf{z}}_{n_s+1} = \mathcal{K}_p \, \kappa(Z_p^{(2:n_s)}).
\end{equation}

The predicted vector is appended to form extended training data:
\begin{equation}
Z_p^{(1:n_s)} \leftarrow [Z_p^{(1:n_s-1)}, \mathbf{z}_{n_s}], \quad
Z_p^{(2:n_s+1)} \leftarrow [Z_p^{(2:n_s)}, \hat{\mathbf{z}}_{n_s+1}].
\end{equation}

To perform multi-step forecasting, the procedure is applied recursively. At each step, the newly predicted latent vector \( \hat{\mathbf{z}}_{n_s+k} \) (for \( k \geq 1 \)) is added to the training matrices:
\begin{equation}
Z_p^{(1:n_s+k)} \leftarrow [Z_p^{(1:n_s+k-1)}, \hat{\mathbf{z}}_{n_s+k-1}], \quad
Z_p^{(2:n_s+k+1)} \leftarrow [Z_p^{(2:n_s+k)}, \hat{\mathbf{z}}_{n_s+k}],
\end{equation}
followed by retraining the kernel ridge regression model with the updated kernel matrix and latent pairs.

This recursive dynamic KRR procedure generates a sequence of predicted latent vectors that extends the original discrete parameter samples into a denser latent trajectory, thereby preparing data suitable for continuous interpolation. Kernel hyperparameters \((d_p,c_p)\) (and the regularizer \(\lambda_p\)) are updated periodically using cross-validation or reconstruction-error criteria to adapt to local variations in parameter space. The same algorithm is applied independently to the nonlinear latent matrix \(\Phi_p\in\mathbb{R}^{r^p\times n_s}\) with its own estimator \(\mathcal{K}_\Phi\) and kernel parameters \((d_\phi,c_\phi,\lambda_\phi)\). 

Compared to single-step static regression, this recursive adaptive KRR generates extended latent sequences for better parameter coverage and robustness to local nonlinearities. Unlike classical PMD limited to pointwise extrapolation, this strategy enables PPMD to achieve improved generalization and accuracy across the entire parameter interval.

\subsection{From discrete latent samples to a dense continuous parameter trajectory}
\label{ppmd_continuous_interpolation}

The recursive dynamic KRR procedure in Section~\ref{sec:parametric_prediction} produces a sequence of latent vectors for a discrete set of parameter samples.
To obtain a continuous representation over the parameter domain and enable predictions at arbitrary parameter values \(\mu^\ast\), these discrete latent samples are further mapped onto a dense parameter interval.
This mapping uses smoothing splines in the latent space, which regularize the discrete trajectories, provide a differentiable parameter-to-state relation, and facilitate smooth parametric field reconstruction for analysis.

Let \(\{\mu_i\}_{i=1}^{N_p}\) denote the ordered set of parameter samples (original plus recursively predicted ones) and, as before, let
\begin{equation}
Z_p = \bigl[z_1,\dots,z_{N_p}\bigr]\in\mathbb{R}^{r^p\times N_p},\qquad
\Phi_p = \bigl[\phi_1,\dots,\phi_{N_p}\bigr]\in\mathbb{R}^{r^p\times N_p}
\end{equation}
be the matrices of linear and nonlinear latent coordinates, respectively (each column corresponds to one parameter sample). Our goal is to construct smooth functions
\begin{equation}
s_{z,j}(\mu)\quad (j=1,\dots,r^p),\qquad s_{\phi,j}(\mu)\quad (j=1,\dots,r^p)
\end{equation}
that interpolates or smooths the discrete coordinates \(Z_p(j,i)\) and \(\Phi_p(j,i)\) as functions of the scalar parameter \(\mu\).

\subsubsection{Smoothing splines and model selection}
For each latent coordinate \(j\), a smoothing spline \(s_{j}(\mu)\) is fitted by minimizing the usual smoothing functional:
\begin{equation}
\min_{s\in\mathcal{C}^2([\mu_{\min},\mu_{\max}])}\ 
\mathcal{J}[s]
:=\sum_{i=1}^{N_p} w_i\bigl(y_i - s(\mu_i)\bigr)^2 \;+\; \alpha \int_{\mu_{\min}}^{\mu_{\max}} (s''(\mu))^2 \, \mathrm{d}\mu,
\label{eq:smoothing_functional}
\end{equation}
where \(y_i\) denotes the observed latent coordinate at parameter \(\mu_i\) (either \(Z_p(j,i)\) or \(\Phi_p(j,i)\)), \(w_i>0\) are sample weights, and \(\alpha\ge 0\) is the penalty weight. Here \(\mathcal{C}^2([\mu_{\min},\mu_{\max}])\) is the space of twice continuously differentiable functions on the closed parameter interval \([\mu_{\min},\mu_{\max}]\).

\paragraph{Equivalent discrete linear system}
Let \(\{B_m(\mu)\}_{m=1}^{M}\) be a suitable cubic B-spline basis (with knots at or between the sample locations). Writing the spline as \(s(\mu)=\sum_{m=1}^{M} c_m B_m(\mu)\) and collecting values in vector form \( \mathbf{c}=(c_1,\dots,c_{M})^\top\), the functional \(\mathcal{J}\) becomes a quadratic form
\begin{equation}
\mathcal{J}[\mathbf{c}] = (\mathbf{y}-\mathbf{B}\mathbf{c})^\top W (\mathbf{y}-\mathbf{B}\mathbf{c}) \;+\; \alpha \, \mathbf{c}^\top R \mathbf{c},
\end{equation}
where \(\mathbf{y}=(y_1,\dots,y_{N_p})^\top\), \(\mathbf{B}\in\mathbb{R}^{N_p\times M}\) with \(\mathbf{B}_{i,m}=B_m(\mu_i)\), \(W=\operatorname{diag}(w_1,\dots,w_{N_p})\), and \(R\in\mathbb{R}^{M\times M}\) are the positive semidefinite penalty matrices with entries \(R_{m,n}=\int B_m''(\mu) B_n''(\mu)\,\mathrm{d}\mu\). The minimizer \(\mathbf{c}^\star\) solves the symmetric linear system
\begin{equation}
\bigl(\mathbf{B}^\top W \mathbf{B} + \alpha R \bigr)\mathbf{c}^\star \;=\; \mathbf{B}^\top W \mathbf{y}.
\label{eq:spline_normal_equation}
\end{equation}
This linear-algebraic form clarifies numerical implementation and the role of the smoothing weight \(\alpha\). Here, \(M\) represents the number of basis functions used to represent the spline (i.e., the dimension of the B-spline basis). Typically, if nodes are placed at each sample, then \(M\approx N_p\), it is also possible to choose \(M<N_p\) (sparse nodes) to reduce the degrees of freedom and enhance smoothness.

\paragraph{Weighted K-fold cross validation on a candidate \(s\)-grid} 
The smoothing level is selected using \(K\)-fold cross validation over a finite candidate grid \(\mathcal{S}=\{s_1,\dots,s_L\}\), typically logarithmically spaced. Let \(\mathcal{I}=\{1,\dots,N_p\}\) denote the index set of samples, and \(\{V_k\}_{k=1}^K\) a partition of \(\mathcal{I}\) into \(K\) disjoint validation folds, with corresponding training sets \(T_k=\mathcal{I}\setminus V_k\). For each candidate \(s_\ell\in\mathcal{S}\) and each fold \(k\), the spline \(s^{(k,\ell)}(\mu)\) is fitted by solving the penalized problem \eqref{eq:smoothing_functional} restricted to the training indices \(T_k\); equivalently, \(\mathbf{B}_{T_k}\) and \(W_{T_k}=\operatorname{diag}(w_i)_{i\in T_k}\) are formed and the linear system is solved:
\begin{equation}
(\mathbf{B}_{T_k}^\top W_{T_k}\mathbf{B}_{T_k}+\alpha(\ell) R)\mathbf{c}^{(k,\ell)} = \mathbf{B}_{T_k}^\top W_{T_k}\mathbf{y}_{T_k},
\end{equation}
where \(\alpha(\ell)\) is the penalty corresponding to \(s_\ell\). The fitted spline is then evaluated on the held-out fold \(V_k\) to compute the validation error
\begin{equation}
E_{k,\ell} = \frac{\sum_{i\in V_k} w_i\bigl(y_i - s^{(k,\ell)}(\mu_i)\bigr)^2}{\sum_{i\in V_k} w_i}.
\label{eq:fold_error}
\end{equation}

The cross-validation estimate for \(s_\ell\) is the fold-average
\begin{equation}
\mathrm{CV}(s_\ell) \;=\; \frac{1}{K}\sum_{k=1}^K E_{k,\ell},
\end{equation}
and the parameter is selected as
\begin{equation}
s^\star \;=\; \arg\min_{s_\ell\in\mathcal{S}} \mathrm{CV}(s_\ell).
\label{eq:choose_s}
\end{equation}
Finally, the chosen spline \(s_j(\mu)\) is refitted on the full dataset \(\mathcal{I}\) using \(s^\star\) (or its corresponding \(\alpha^\star\)).

Denote by \(\mathcal{O}\subset\mathcal{I}\) the set of original high-fidelity samples and by \(\mathcal{P}=\mathcal{I}\setminus\mathcal{O}\) the recursively predicted samples. A convenient and robust weighting rule is
\begin{equation}
w_i = \begin{cases}
1, & i\in\mathcal{O},\\
\gamma, & i\in\mathcal{P},
\end{cases}
\qquad 0<\gamma<1,
\label{eq:weight_rule}
\end{equation}
which down-weights extrapolated/less-trusted samples.

The fold partition \(\{V_k\}\) should be stratified (if possible) to distribute predicted samples \(\mathcal{P}\) across folds so that validation statistics reflect both original and predicted-data performance. The candidate grid \(\mathcal{S}\) is conveniently taken to be logarithmically spaced, e.g. \(s_\ell = 10^{a_\ell}\) for \(a_\ell\in[\!-1,0]\) or a wider range if necessary. The numerical solution of \eqref{eq:spline_normal_equation} is stable due to the penalty \(\alpha R\); nevertheless, one should monitor the condition number of the matrix \(\mathbf{B}^\top W\mathbf{B}+\alpha R\) and scale basis functions if ill-conditioning appears.

As the smoothing threshold \(s\downarrow 0\) (equivalently \(\alpha\downarrow 0\)), the spline approaches an interpolant of the training data, whereas at \(s\uparrow\infty\) (or \(\alpha\uparrow\infty\)), the curvature penalty dominates, and the solution tends toward a low-degree polynomial (in the extreme, a linear fit). Cross validation balances these two regimes in a data-driven manner.

\subsubsection{Dense latent reconstruction.}
The parameter variable is now treated as continuous on the convex hull of the discrete samples. Let
\begin{equation}
\mu_{\min}=\min_{i}\mu_i,\qquad \mu_{\max}=\max_{i}\mu_i,
\end{equation}
and consider an arbitrary parameter value \(\mu^\ast\in[\mu_{\min},\mu_{\max}]\). Continuous latent maps are obtained using the smoothing splines fitted per latent coordinate
\begin{equation}
\hat z : [\mu_{\min},\mu_{\max}]\to\mathbb{R}^{r^p},\qquad
\hat\phi : [\mu_{\min},\mu_{\max}]\to\mathbb{R}^{r^p},
\end{equation}
with components \(\hat z(\mu)=[s_{z,1}(\mu),\dots,s_{z,r^p}(\mu)]^\top\) and \(\hat\phi(\mu)=[s_{\phi,1}(\mu),\dots,s_{\phi,r^p}(\mu)]^\top\).

The continuous linear reconstruction at an arbitrary parameter \(\mu^\ast\) is then
\begin{equation}
\widehat{U}_{\mathrm{lin}}(\mu^\ast)\;=\;\Psi_p^{(r_p)}\,\hat z(\mu^\ast)\in\mathbb{R}^{N},
\label{eq:continuous_lin_recon}
\end{equation}
where \(\Psi_p^{(r_p)}\in\mathbb{R}^{N\times r^p}\) is obtained in equation \eqref{parametric linear}.  
Similarly, the continuous nonlinear correction (residual) is obtained by applying the previously learned nonlinear map \(\mathcal{K}^{(p)}:\mathbb{R}^{r^p}\to\mathbb{R}^N\) obtained in equation \eqref{parametric nonlinear} to the continuous nonlinear coordinates:
\begin{equation}
\widehat{\Delta}(\mu^\ast)\;=\;\mathcal{K}^{(p)}\bigl(\hat\phi(\mu^\ast)\bigr)\in\mathbb{R}^N.
\label{eq:continuous_delta}
\end{equation}
Combining \eqref{eq:continuous_lin_recon} and \eqref{eq:continuous_delta} yields a \emph{continuous} parametric reconstruction operator
\begin{equation} 
\widehat U(\mu^\ast)\;=\;\widehat{U}_{\mathrm{lin}}(\mu^\ast)\;+\;\widehat{\Delta}(\mu^\ast)
\qquad\text{for all }\mu^\ast\in[\mu_{\min},\mu_{\max}].
\label{eq:continuous_combined}
\end{equation}

The smoothing-spline interpolation in latent space converts a discrete set of (possibly noisy) latent coordinates into a dense, smooth, and numerically stable representation over the parameter interval. A continuous reconstruction operator \(\mu\mapsto\widehat U(\mu)\) is thus obtained, which (i) enables accurate pointwise predictions at unseen parameter values and (ii) supplies dense, smooth parametric fields for downstream analysis, visualization, and surrogate-model training.

\subsection{Full-state reconstruction}

As in standard PMD, PPMD reconstructs the high-dimensional state by combining linear and nonlinear components:
\begin{equation}
\widehat{\mathbf{u}}(\mu) \;=\; \Psi_p^{(r_p)}\,\hat z(\mu) \;+\; \mathcal{K}^{(p)} \bigl(\hat\phi(\mu)\bigr),
\label{eq:ppmd_full_recon}
\end{equation}
where \(\Psi_p^{(r_p)}\in\mathbb{R}^{N\times r^p}\) denotes the PPMD truncated linear basis obtained from equation \eqref{parametric linear} in the PPMD linear stage, \(\hat z(\mu)\in\mathbb{R}^{r^p}\) and \(\hat\phi(\mu)\in\mathbb{R}^{r^p}\) are the continuous latent maps obtained by spline interpolation of the linear and nonlinear coordinates respectively, and \(\mathcal{K}^{(p)}:\mathbb{R}^{r^p}\to\mathbb{R}^N\) is the nonlinear lifting operator learned from residuals.

The nonlinear lifting \(\mathcal{K}^{(p)}\) is learned via \emph{kernel ridge regression} using a polynomial kernel. 
Let \(\Phi_p=[\phi_1,\dots,\phi_{n_s}]\in\mathbb{R}^{r^p\times n_s}\) \eqref{reductionresidual} be the matrix of nonlinear coordinates at the training parameter samples and \(R_p\in\mathbb{R}^{N\times n_s}\) \eqref{residual} the corresponding residuals. 
Define the kernel matrix \(\mathcal{M}\in\mathbb{R}^{n_s\times n_s}\) with entries
\begin{equation}
\mathcal{M}_{ij} \;=\; m(\phi_i,\phi_j) \;=\; \bigl(\phi_i^\top \phi_j + c\bigr)^d,
\end{equation}
where \(d\in\mathbb{N}\) is the polynomial degree and \(c\ge 0\) the offset (the usual polynomial-kernel parameters). \(\mathcal{K}^{(p)}\) is obtained as the minimizer of
\begin{equation}\label{parametric nonlinear}
\mathcal{K}^{(p)} \;=\; \arg\min_{\mathcal{K}^{(p)}}\; \bigl\| \mathcal{K}^{(p)}\,\mathcal{M} - R_p \bigr\|_F^2 \;+\; \lambda \,\|\mathcal{K}^{(p)}\|_F^2,
\end{equation}
where \(\lambda>0\) is a regularization parameter. In practice \(\mathcal{K}^{(p)}\) is implemented via a polynomial feature map followed by a KRR estimator (evaluated column-wise) so that \(\mathcal{K}^{(p)}(\hat\phi(\mu))\) returns the high-dimensional residual corresponding to the nonlinear coordinates \(\hat\phi(\mu)\).

PPMD constructs continuous parametric surrogates from limited high-fidelity data through a coherent pipeline: it first aggregates snapshots across parameter samples to form a parameter-driven model, then decomposes the state into a linear component \(\Psi_p^{(r)}\hat z(\mu)\) and a nonlinear residual \(\mathcal{K}(\hat\phi(\mu))\). Both latent coordinates are extrapolated via recursive adaptive kernel ridge regression for robust multi-step prediction. Finally, discrete latent samples are converted into smooth maps \(\hat z(\mu),\hat\phi(\mu)\) using weighted smoothing splines, yielding a continuous reconstruction operator \(\mu\mapsto\widehat U(\mu)\) for accurate pointwise evaluation and downstream tasks.

\section{Convergence analysis of the PPMD}
\label{sec:ppmd_convergence}

A convergence analysis is essential to validate the PPMD method, ensuring that the parametric reduced-order model reliably approximates the underlying solution manifold. This section provides a rigorous mathematical framework for assessing the accuracy and computational efficiency of PPMD, with particular emphasis on the convergence properties of its linear and nonlinear components.

\subsection{Convergence of linear parametric features}

Given a parametric family of solutions $\{U(\mu)\}_{\mu \in \mathcal{P}}$ where $\mathcal{P} = [\mu_{\min}, \mu_{\max}] \subset \mathbb{R}^p$ is a compact parameter domain. A set of $n_s$ parameter samples ${\mu_i}_{i=1}^{n_s}$ is drawn independently from a distribution $\rho$ on $\mathcal{P}$. For each $\mu_i$, the corresponding high-fidelity snapshot $\bar{u}(\mu_i) \in \mathbb{R}^N$ is obtained.

\subsubsection{Construction of the sample matrix and covariance operators}

The column-normalized parametric snapshot matrix is defined as follows:
\begin{equation}
\bar{U}_p = [\bar{u}(\mu_1), \dots, \bar{u}(\mu_{n_s})] \in \mathbb{R}^{N \times n_s}.
\end{equation}

The true population covariance operator $C_p: \mathbb{R}^N \to \mathbb{R}^N$ is defined as follows:
\begin{equation}
C_p = \mathbb{E}_{\mu \sim \rho}[\bar{u}(\mu) \bar{u}(\mu)^\top] = \int_{\mathcal{P}} \bar{u}(\mu) \bar{u}(\mu)^\top \, d\rho(\mu).
\end{equation}

Its empirical counterpart, the sample covariance matrix $\hat{C}_{n_s}$, is given by:
\begin{equation}
\hat{C}_{n_s} = \frac{1}{n_s} \sum_{i=1}^{n_s} \bar{u}(\mu_i) \bar{u}(\mu_i)^\top = \frac{1}{n_s} \bar{U}_p \bar{U}_p^\top.
\end{equation}

Interest lies in the low-rank approximation of $\bar{U}_p$ via truncated SVD:
\begin{equation}
\bar{U}_p^{(r_p)} = \Psi_p^{(r_p)} \Lambda_p^{(r_p)} (\Theta_p^{(r_p)})^\top,
\end{equation}
where $\Psi_p^{(r_p)} \in \mathbb{R}^{N \times r_p}$ contains the leading $r_p$ left singular vectors, $\Lambda_p^{(r_p)} \in \mathbb{R}^{r_p \times r_p}$ is the diagonal matrix of singular values, and $\Theta_p^{(r_p)} \in \mathbb{R}^{n_s \times r_p}$ contains the right singular vectors.

\subsubsection{Convergence of the sample covariance matrix in operator norm}

The convergence of $\hat{C}_{n_s}$ to $C_p$ in the operator norm is first established. Under the assumption that the snapshots $\bar{u}(\mu)$ are uniformly bounded in $L^2(\rho; \mathbb{R}^N)$, i.e., there exists $M > 0$ such that $\mathbb{E}[|\bar{u}(\mu)|_2^2] \leq M^2$, matrix concentration inequalities can be applied.

Let $X_i = \bar{u}(\mu_i)\bar{u}(\mu_i)^\top$ for $i=1,\dots,n_s$. These are i.i.d. random symmetric positive semidefinite matrices with $\mathbb{E}[X_i] = C_p$. Recall that for a matrix $A$, the operator norm (or spectral norm) $\|A\|_{\mathrm{op}}$ is defined as $\|A\|_{\mathrm{op}} = \sup_{\|x\|_2=1} \|Ax\|_2$. Each $X_i$ satisfies the boundedness condition:
\begin{equation}
\|X_i\|_{\mathrm{op}} = \|\bar{u}(\mu_i)\|_2^2 \leq R \quad \text{almost surely},
\end{equation}
for some constant $R > 0$. The variance of $X_i$ is controlled by:
\begin{equation}
\|\mathbb{E}[X_i^2]\|_{\mathrm{op}} = \|\mathbb{E}[\|\bar{u}(\mu_i)\|_2^2 \bar{u}(\mu_i)\bar{u}(\mu_i)^\top]\|_{\mathrm{op}} \leq R \|C_p\|_{\mathrm{op}}.
\end{equation}

Define $Y_i = X_i / n_s$, so that $\hat{C}_{n_s} - C_p = \sum_{i=1}^{n_s} Y_i$. Then $\|Y_i\|_{\mathrm{op}} \leq 2R/n_s$ and the total variance satisfies
\begin{equation}
\Big\|\sum_{i=1}^{n_s} \mathbb{E}[Y_i^2]\Big\|_{\mathrm{op}} = \frac{1}{n_s}\|\mathbb{E}[X_1^2]\|_{\mathrm{op}} \leq \frac{R\|C_p\|_{\mathrm{op}} + R^2}{n_s}.
\end{equation}
By applying the matrix Bernstein inequality\cite{tropp2012user} with parameters $L = 2R/n_s$ and $\sigma^2 = (R\|C_p\|_{\mathrm{op}} + R^2)/n_s$, the following bound is obtained for any $t > 0$:
\begin{equation}
\mathbb{P}\left(\|\hat{C}_{n_s} - C_p\|_{\mathrm{op}} \geq t\right) \leq 2N \exp\left(-\frac{n_s t^2/2}{R\|C_p\|_{\mathrm{op}} + R^2 + (2R/3)t}\right).
\end{equation}
To obtain a high-probability bound, it sets the right-hand side equal to $\delta$ and solves for $t$. Let $S = \log(2N/\delta)$. The inequality $2N \exp(-n_s t^2/(2V + 4Rt/3)) \leq \delta$ (where $V = R\|C_p\|_{\mathrm{op}} + R^2$) leads to the quadratic inequality $n_s t^2 - (4RS/3)t - 2SV \geq 0$. Its positive solution yields
\begin{equation}
t \leq \frac{4RS}{3n_s} + \sqrt{\frac{2SV}{n_s} + \left(\frac{4RS}{3n_s}\right)^2} \leq C\left(\sqrt{\frac{\log(2N/\delta)}{n_s}} + \frac{\log(2N/\delta)}{n_s}\right),
\end{equation}
$C$ depends on $R$ and $\|C_p\|_{\mathrm{op}}$. For large $n_s$, dominant term is $\mathcal{O}(\sqrt{\log(2N/\delta)/n_s})$. Hence, with a probability of at least $1-\delta$,
\begin{equation}
\|\hat{C}_{n_s} - C_p\|_{\mathrm{op}} \leq C \sqrt{\frac{\log(2N/\delta)}{n_s}}.
\end{equation}
To obtain the mean convergence rate, the tail bound is integrated:
\begin{equation}
\mathbb{E}[\|\hat{C}_{n_s} - C_p\|_{\mathrm{op}}] = \int_0^\infty \mathbb{P}(\|\hat{C}_{n_s} - C_p\|_{\mathrm{op}} \geq t) \, dt.
\end{equation}
Splitting the integral at $t_0 = C\sqrt{\log(2N)/n_s}$ and using the Bernstein bound for $t \geq t_0$, the following bound is obtained:
\begin{equation}
\mathbb{E}[\|\hat{C}_{n_s} - C_p\|_{\mathrm{op}}] \leq t_0 + \int_{t_0}^\infty 2N \exp\left(-\frac{n_s t^2/2}{V + (2R/3)t}\right) dt = \mathcal{O}\left(\sqrt{\frac{\log N}{n_s}}\right).
\end{equation}
Since $N$ (the spatial dimension) is fixed, it is concluded that
\begin{equation}
\mathbb{E}[\|\hat{C}_{n_s} - C_p\|_{\mathrm{op}}] = \mathcal{O}\left(\frac{1}{\sqrt{n_s}}\right).
\end{equation}
This establishes that the sample covariance matrix converges to the true covariance operator at a rate of $\mathcal{O}(1/\sqrt{n_s})$ in operator norm.

\subsubsection{Convergence of eigenvalues}

Let $\lambda_1 \geq \lambda_2 \geq \cdots \geq \lambda_N \geq 0$ denote the eigenvalues of $C_p$, and $\hat{\lambda}_1 \geq \hat{\lambda}_2 \geq \cdots \geq \hat{\lambda}_N \geq 0$ the eigenvalues of $\hat{C}_{n_s}$. By Weyl's inequality \cite{horn2012matrix}, the following bound holds for each $k = 1,\dots,N$:
\begin{equation}
|\hat{\lambda}_k - \lambda_k| \leq \|\hat{C}_{n_s} - C_p\|_{\mathrm{op}}.
\end{equation}

Therefore, with a probability of at least $1-\delta$,
\begin{equation}
\max_{1 \leq k \leq N} |\hat{\lambda}_k - \lambda_k| \leq C \sqrt{\frac{\log(2N/\delta)}{n_s}},
\end{equation}
and in expectation,
\begin{equation}
\mathbb{E}[|\hat{\lambda}_k - \lambda_k|] = \mathcal{O}\left(\frac{1}{\sqrt{n_s}}\right).
\end{equation}

This shows that the empirical eigenvalues converge uniformly to the population eigenvalues at the same $\mathcal{O}(1/\sqrt{n_s})$ rate.

\subsubsection{Convergence of eigenvectors and principal subspaces}

The convergence of eigenvectors requires additional assumptions on the eigenvalue gaps. Let $\delta_k = \min\{\lambda_{k-1} - \lambda_k, \lambda_k - \lambda_{k+1}\}$ denote the spectral gap around $\lambda_k$, with the convention $\lambda_0 = \infty$ and $\lambda_{N+1} = 0$. Assume $\delta_k > 0$ for $k = 1,\dots,r_p$.

Let $v_k$ and $\hat{v}_k$ be the $k$-th eigenvectors of $C_p$ and $\hat{C}_{n_s}$ respectively, corresponding to $\lambda_k$ and $\hat{\lambda}_k$. By the Davis-Kahan $\sin\Theta$ theorem \cite{davis1970rotation, yu2015useful}, the following bound holds:
\begin{equation}
\sin\angle(v_k, \hat{v}_k) \leq \frac{2\|\hat{C}_{n_s} - C_p\|_{\mathrm{op}}}{\delta_k}.
\end{equation}

Thus, with a probability of at least $1-\delta$,
\begin{equation}
\sin\angle(v_k, \hat{v}_k) \leq \frac{2C}{\delta_k} \sqrt{\frac{\log(2N/\delta)}{n_s}},
\end{equation}
and in expectation,
\begin{equation}
\mathbb{E}[\sin\angle(v_k, \hat{v}_k)] = \mathcal{O}\left(\frac{1}{\delta_k\sqrt{n_s}}\right).
\end{equation}

For the principal subspace spanned by the first $r_p$ eigenvectors, define the projection matrices:
\begin{equation}
P = \sum_{k=1}^{r_p} v_k v_k^\top, \quad \hat{P} = \sum_{k=1}^{r_p} \hat{v}_k \hat{v}_k^\top.
\end{equation}

The distance between these subspaces can be measured by the spectral norm of their difference. Applying the Davis-Kahan theorem for subspaces, the following bound is obtained:
\begin{equation}
\|\hat{P} - P\|_{\mathrm{op}} \leq \frac{2\sqrt{r_p} \|\hat{C}_{n_s} - C_p\|_{\mathrm{op}}}{\min_{1 \leq k \leq r_p} \delta_k}.
\end{equation}

Therefore, with a probability of at least $1-\delta$,
\begin{equation}
\|\hat{P} - P\|_{\mathrm{op}} \leq \frac{2\sqrt{r_p} C}{\min_{1 \leq k \leq r_p} \delta_k} \sqrt{\frac{\log(2N/\delta)}{n_s}},
\end{equation}
and in expectation,
\begin{equation}
\mathbb{E}[\|\hat{P} - P\|_{\mathrm{op}}] = \mathcal{O}\left(\frac{\sqrt{r_p}}{\min_{1 \leq k \leq r_p} \delta_k \sqrt{n_s}}\right).
\end{equation}

This establishes the convergence of the linear subspace spanned by the leading $r_p$ singular vectors $\Psi_p^{(r_p)}$ to the true principal subspace of the covariance operator $C_p$. The convergence rate is $\mathcal{O}(1/\sqrt{n_s})$, modulated by the spectral gaps and subspace dimension.

\subsubsection{Implications for PPMD linear reconstruction}

In PPMD, the linear reconstruction uses the estimated basis $\Psi_p^{(r_p)}$ and the corresponding latent coordinates $z(\mu)$. The convergence of $\Psi_p^{(r_p)}$ ensures that for any parameter $\mu$, the projection error is:
\begin{equation}
\|P \bar{u}(\mu) - \hat{P} \bar{u}(\mu)\|_2 \leq \|\hat{P} - P\|_{\mathrm{op}} \|\bar{u}(\mu)\|_2
\end{equation}
vanishes as $n_s \to \infty$. Consequently, the linear component of PPMD provides a consistent approximation of the dominant linear features of the parametric solution manifold.

The analysis presented here provides a rigorous foundation for the linear part of PPMD. Combined with the convergence analysis of the nonlinear components (spectral embedding and kernel ridge regression), it ensures the overall consistency and reliability of the PPMD framework for parametric reduced-order modeling.

\subsection{Convergence of nonlinear parametric features}

The nonlinear residuals $R_p(\mu) = \bar{u}(\mu) - \Psi_p^{(r_p)}z(\mu)$ lie on a low-dimensional manifold $\mathcal{M} \subset \mathbb{R}^N$. PPMD constructs a probabilistic transition operator on $\mathcal{M}$ to extract nonlinear coordinates $\phi(\mu)$. This section provides a rigorous convergence analysis for both the probabilistic transition operator construction and the subsequent kernel ridge regression.

\subsubsection{Convergence of the PPMD operator}

\begin{lemma}[Graph-geodesic approximation]\label{lem:graph_geodesic}
Assuming the residual vectors $\{r_i\}_{i=1}^{n_s}$ are i.i.d. samples from a compact $d$-dimensional Riemannian manifold $\mathcal{M} \subset \mathbb{R}^N$ with a smooth density $p: \mathcal{M} \to (0,\infty)$ satisfying $0 < p_{\min} \leq p(x) \leq p_{\max} < \infty$ for all $x \in \mathcal{M}$. Let $d_{\mathcal{M}}(\cdot,\cdot)$ denote the geodesic distance on $\mathcal{M}$. In practice, this distance is approximated using graph shortest-path distances $\operatorname{dist}_G(r_i, r_j)$ computed on a $k$-nearest neighbor graph $G$ with $k = k(n_s) \asymp \log n_s$. For a $k$-NN graph with $k \asymp \log n_s$, the following bound holds with probability at least $1 - n_s^{-1}$:
\begin{equation}
\sup_{1 \leq i,j \leq n_s} |\operatorname{dist}_G(r_i, r_j) - d_{\mathcal{M}}(r_i, r_j)| = \mathcal{O}\left(\left(\frac{\log n_s}{n_s}\right)^{1/d}\right).
\end{equation}
\end{lemma}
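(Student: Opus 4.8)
The plan is to prove a two-sided \emph{multiplicative} comparison of the form $(1 - c\delta)\,d_{\mathcal{M}}(r_i,r_j) \leq \operatorname{dist}_G(r_i,r_j) \leq (1+c\delta)\,d_{\mathcal{M}}(r_i,r_j)$ with $\delta := (\log n_s/n_s)^{1/d}$, following the classical graph-approximation argument for geodesics on embedded manifolds, and then to read off the claimed additive bound by multiplying through by $\operatorname{diam}(\mathcal{M})$. The single probabilistic ingredient is a covering estimate: first I would show that, with probability at least $1 - n_s^{-1}$, the sample $\{r_i\}$ forms a $\delta$-net of $\mathcal{M}$. Because $p \geq p_{\min} > 0$ and $\mathcal{M}$ is a compact $d$-manifold, every geodesic ball of radius $\delta$ carries mass at least $c_d\,p_{\min}\,\delta^d$; covering $\mathcal{M}$ by $\asymp \delta^{-d}$ such balls and taking a union bound, the probability that some ball is unsampled is at most $\delta^{-d}\exp(-c\,p_{\min}\,\delta^d\,n_s)$, and the choice $\delta^d = C\log n_s/n_s$ with $C$ large renders this $\leq n_s^{-1}$. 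The same binomial-tail reasoning shows that with $k \asymp \log n_s$ the distance to the $k$-th nearest neighbor of every sample concentrates at the scale $(k/n_s)^{1/d} \asymp \delta$, so the connection radius of the $k$-NN graph is comparable to the net scale---bounded above and below by constant multiples of $\delta$.

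Second, for the lower bound I would use the local chord--geodesic relation: for points within the reach of $\mathcal{M}$, one has $\|r_a - r_b\| \geq (1 - c_\kappa\,d_{\mathcal{M}}(r_a,r_b)^2)\,d_{\mathcal{M}}(r_a,r_b)$, where $c_\kappa$ depends only on the extrinsic curvature. Since every graph edge spans a Euclidean distance $\mathcal{O}(\delta)$, summing this inequality along any shortest graph path and invoking the triangle inequality for the geodesic metric yields $\operatorname{dist}_G(r_i,r_j) \geq (1 - c_\kappa\delta^2)\,d_{\mathcal{M}}(r_i,r_j)$.

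Third---and this is the delicate step---I would establish the matching upper bound by tracing the true geodesic. Parametrizing the minimizing geodesic from $r_i$ to $r_j$ and placing waypoints at arc-length spacing $\asymp \delta$, the net property furnishes a sample within $\delta$ of each waypoint; consecutive such samples are separated by $\mathcal{O}(\delta)$, hence---by the connection-radius estimate from the first step---are joined by a graph edge. Bounding each edge by the corresponding geodesic segment plus the two net offsets and summing gives $\operatorname{dist}_G(r_i,r_j) \leq (1 + c'\delta)\,d_{\mathcal{M}}(r_i,r_j)$. The main obstacle lives precisely here: one must calibrate the constants so that the waypoints are dense enough to lie within the $k$-NN connection radius (forcing the spacing to be at most a constant multiple of the connection radius) while the accumulated net offsets remain a lower-order fraction of the geodesic length (forcing the spacing to be at least a constant multiple of the net radius). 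The choice $k \asymp \log n_s$ is exactly what makes these two scales coincide up to constants and keeps the total slack $\mathcal{O}(\delta)$ rather than $\mathcal{O}(1)$.

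Finally, combining the two bounds gives $|\operatorname{dist}_G(r_i,r_j) - d_{\mathcal{M}}(r_i,r_j)| \leq c\,\delta\,d_{\mathcal{M}}(r_i,r_j) \leq c\,\operatorname{diam}(\mathcal{M})\,\delta$. Because the constants depend only on $p_{\min}$, the curvature of $\mathcal{M}$, and $\operatorname{diam}(\mathcal{M})$---all uniform over the sample---the estimate holds simultaneously for every pair $(i,j)$, so taking the supremum on the same probability-$(1-n_s^{-1})$ event yields $\sup_{1\le i,j\le n_s}|\operatorname{dist}_G(r_i,r_j)-d_{\mathcal{M}}(r_i,r_j)| = \mathcal{O}((\log n_s/n_s)^{1/d})$, as claimed.
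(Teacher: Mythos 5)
Your route is the same one the paper takes (both follow the Bernstein--de Silva--Langford--Tenenbaum covering-plus-geodesic-tracing argument): a high-probability covering estimate at scale $\delta \asymp (\log n_s/n_s)^{1/d}$, a chord-versus-geodesic comparison for the lower bound, and an upper bound obtained by placing waypoints along the true geodesic and snapping each waypoint to a nearby sample. You even supply the lower-bound half and the $k$-NN connection-radius estimate, both of which the paper's sketch leaves implicit. However, the step you yourself flag as delicate contains a genuine gap, and your resolution of it is backwards. With waypoint spacing $s$ and net radius $\delta$, each hop between snapped samples costs up to the corresponding arc length plus $2\delta$, and these offsets do \emph{not} telescope when $d\ge 2$: offsets perpendicular to the geodesic with alternating directions realize the triangle-inequality worst case, so the per-hop slack is genuinely $\Theta(\delta)$. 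The number of hops is $d_{\mathcal{M}}(r_i,r_j)/s$, and graph connectivity forces $s+2\delta \lesssim \varepsilon$, the $k$-NN connection radius. Hence the accumulated slack is of order $\delta\, d_{\mathcal{M}}(r_i,r_j)/s \gtrsim (\delta/\varepsilon)\, d_{\mathcal{M}}(r_i,r_j)$. With $k\asymp \log n_s$ the two scales coincide, $\delta\asymp\varepsilon$, so this is a \emph{constant} multiplicative error --- additively $\mathcal{O}(\operatorname{diam}\mathcal{M})$, not $\mathcal{O}(\delta)$ as you assert. The coincidence of scales is exactly what prevents the slack from being lower order, rather than what rescues it: to drive the multiplicative error to zero by this argument one needs $\delta/\varepsilon\to 0$, i.e.\ $k/\log n_s \to \infty$ (this is precisely the asymptotic condition imposed in the reference the paper cites for $k$-NN graphs), and even then the error is $\mathcal{O}(\delta/\varepsilon)\,d_{\mathcal{M}}$, which is far larger than the claimed $(\log n_s/n_s)^{1/d}$ for pairs at distance comparable to the diameter.

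In fairness, the paper's own proof has the identical unaddressed accounting --- it snaps $d_{\mathcal{M}}/\delta_n$ waypoints, each contributing $\mathcal{O}(\delta_n)$, and then asserts a total error of $\mathcal{O}(\delta_n)$ rather than $\mathcal{O}(d_{\mathcal{M}})$ --- so you have faithfully reproduced the paper's argument, gap included. But judged on its own terms, your third step does not establish the stated uniform additive rate: either the hypothesis must be strengthened (e.g.\ $k/\log n_s\to\infty$, or an $\varepsilon$-graph with $\varepsilon$ asymptotically larger than the covering radius), or the conclusion must be weakened to a constant-factor (or slowly vanishing) multiplicative approximation.
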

\begin{proof}
The proof follows from standard covering arguments in Riemannian geometry \cite{bernstein2000graph, tenenbaum2000global}. Since $\mathcal{M}$ is compact and $d$-dimensional, the minimum number of balls of radius $\delta$ needed to cover $\mathcal{M}$ is $\mathcal{O}(\delta^{-d})$. With $n_s$ i.i.d. samples, the covering radius is $\mathcal{O}((\log n_s/n_s)^{1/d})$ with high probability. For any two points $r_i, r_j$, construct a path along the geodesic connecting them and place intermediate points at a spacing of $\delta_n = \mathcal{O}((\log n_s/n_s)^{1/d})$. By the covering property, each intermediate point is within $\delta_n$ of some sample point. The graph path connecting these samples approximates the geodesic with an error of $\mathcal{O}(\delta_n)$.
\end{proof}

Define the affinity matrix $A \in \mathbb{R}^{n_s \times n_s}$ with entries:
\begin{equation}
A_{ij} = \exp\left(-\frac{\operatorname{dist}_G(r_i, r_j)^2}{\varepsilon_p^2}\right),
\end{equation}
where $\varepsilon_p = \varepsilon_p(n_s) > 0$ is a bandwidth parameter. Let $D = \operatorname{diag}(D_1, \dots, D_{n_s})$ with $D_i = \sum_{j=1}^{n_s} A_{ij}$, and define the normalized probabilistic transition matrix:
\begin{equation}
P_p = D^{-1}A.
\end{equation}

The continuous analog is the kernel integral operator $\mathcal{P}_{\varepsilon_p}: L^2(\mathcal{M}) \to L^2(\mathcal{M})$ defined by:
\begin{equation}
(\mathcal{P}_{\varepsilon_p} f)(x) = \frac{\int_{\mathcal{M}} \exp\left(-\frac{d_{\mathcal{M}}(x,y)^2}{\varepsilon_p^2}\right) f(y) p(y) \, dV(y)}{\int_{\mathcal{M}} \exp\left(-\frac{d_{\mathcal{M}}(x,y)^2}{\varepsilon_p^2}\right) p(y) \, dV(y)},
\end{equation}
where $dV$ is the volume measure on $\mathcal{M}$.

\begin{theorem}[Convergence of the probability transition operator]\label{thm:diffusion_convergence}
Under the assumption in ~\ref{lem:graph_geodesic} and with $\varepsilon_p = \varepsilon_p(n_s)$ satisfying $\varepsilon_p \to 0$ and $n_s \varepsilon_p^{d/2} \to \infty$ as $n_s \to \infty$, let \(\delta \in (0,1)\) be a confidence parameter, then the following bound holds with probability at least $1 - \delta$:
\begin{equation}
\|P_p - \mathcal{P}_{\varepsilon_p}\|_{\mathrm{op}} \leq C_1 \varepsilon_p^2 + C_2 \sqrt{\frac{\log(1/\delta)}{n_s \varepsilon_p^{d/2}}} + C_3 \frac{(\log n_s/n_s)^{1/d}}{\varepsilon_p},
\end{equation}
where $C_1, C_2, C_3 > 0$ depends on $d$, $p_{\min}$, $p_{\max}$, and the geometry of $\mathcal{M}$.
\end{theorem}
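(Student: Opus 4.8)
The plan is to bound the spectral-norm difference by a triangle inequality through a single intermediate operator, thereby isolating the three error sources that appear on the right-hand side. I would introduce $\widetilde P_p = \widetilde D^{-1}\widetilde A$, the row-normalized operator built from the \emph{exact} geodesic affinities $\widetilde A_{ij} = \exp(-d_{\mathcal{M}}(r_i,r_j)^2/\varepsilon_p^2)$ on the same sample set, and then split
\[
\|P_p - \mathcal{P}_{\varepsilon_p}\|_{\mathrm{op}} \;\le\; \underbrace{\|P_p - \widetilde P_p\|_{\mathrm{op}}}_{\text{(I) graph--geodesic error}} \;+\; \underbrace{\|\widetilde P_p - \mathcal{P}_{\varepsilon_p}\|_{\mathrm{op}}}_{\text{(II) discretization error}} .
\]
Term (I) will yield the $C_3$ contribution, while (II) will split further into a statistical fluctuation producing the $C_2$ term and a deterministic smoothing bias producing the $C_1\varepsilon_p^2$ term.

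For (I), I would first record, via Lemma~\ref{lem:graph_geodesic}, the uniform geodesic error $\eta_{n_s} := \sup_{i,j}|\operatorname{dist}_G(r_i,r_j) - d_{\mathcal{M}}(r_i,r_j)| = \mathcal{O}((\log n_s/n_s)^{1/d})$, valid on an event of probability $1-n_s^{-1}$. Since $s\mapsto \exp(-s^2/\varepsilon_p^2)$ has derivative bounded in magnitude by $c/\varepsilon_p$, the mean value theorem gives the entrywise bound $|A_{ij}-\widetilde A_{ij}|\le (c/\varepsilon_p)\,\eta_{n_s}$. The essential auxiliary estimate is a uniform lower bound on the degrees, $D_i,\widetilde D_i \gtrsim n_s\varepsilon_p^{d}$, which holds with high probability because each sample has $\gtrsim n_s\varepsilon_p^d$ neighbours within one bandwidth (a ball-volume count using $p\ge p_{\min}$). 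With degrees bounded below, the normalization $A\mapsto D^{-1}A$ is Lipschitz, so the entrywise perturbation propagates to $P_p$; converting the resulting entrywise/row-sum control to the spectral norm through the interpolation inequality $\|\cdot\|_{\mathrm{op}}\le(\|\cdot\|_1\,\|\cdot\|_\infty)^{1/2}$ (Schur test) gives $(\mathrm{I})=\mathcal{O}(\eta_{n_s}/\varepsilon_p)$, which is the $C_3$ term.

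For (II), I would compare the empirical operator $\widetilde P_p$ with the population operator $\mathcal{P}_{\varepsilon_p}$ by writing each as a quotient of a numerator and a degree. The empirical numerator $\frac{1}{n_s}\sum_j \exp(-d_{\mathcal{M}}(x,r_j)^2/\varepsilon_p^2) f(r_j)$ is an unbiased estimator of the integral defining $\mathcal{P}_{\varepsilon_p}$, and likewise for the degree. The \emph{variance} term $C_2$ then follows from concentrating these empirical kernel sums about their means: each summand is bounded by $1$ with second moment $\int \exp(-2d_{\mathcal{M}}(x,y)^2/\varepsilon_p^2)\,p\,dV = \mathcal{O}(\varepsilon_p^{d})$, so a (matrix) Bernstein argument~\cite{tropp2012user} controls the fluctuation, and after dividing by the degree and upgrading to operator norm one obtains the stated $C_2\sqrt{\log(1/\delta)/(n_s\varepsilon_p^{d/2})}$. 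The \emph{bias} term $C_1\varepsilon_p^2$ collects the two deterministic effects left after this linearization: the second-order ratio bias incurred by linearizing the quotient of two fluctuating averages, and the intrinsic smoothing bias of the normalized Gaussian kernel operator, whose short-time expansion on a curved manifold carries a leading correction of order $\varepsilon_p^2$ governed by curvature and $\nabla p$ (the classical Coifman--Lafon asymptotics~\cite{coifman2006diffusion}).

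The hardest part will be the operator-norm control itself, for two reasons. First, $P_p$ is a finite stochastic matrix while $\mathcal{P}_{\varepsilon_p}$ is an integral operator on $L^2(\mathcal{M})$, so the difference is not literally a single-space operator norm; a rigorous treatment requires a common functional framework—introducing a restriction/sampling operator or a Nyström extension of $P_p$—so that $\|\cdot\|_{\mathrm{op}}$ is interpreted consistently and the entrywise and pointwise estimates above can be lifted to the spectral norm. Second, everything hinges on the uniform lower bound for the random degrees $D_i$: a single near-isolated vertex would make $D^{-1}$ blow up and invalidate both the normalization-stability step in (I) and the ratio linearization in (II), so establishing $\min_i D_i \gtrsim n_s\varepsilon_p^{d}$ with high probability (uniformly in $i$, via a union bound over a covering of $\mathcal{M}$) is the technical crux on which the whole estimate rests.
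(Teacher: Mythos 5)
Your proposal follows essentially the same route as the paper's own proof: the same three-way error decomposition into a smoothing bias of order $\varepsilon_p^2$ (via the Coifman--Lafon kernel expansion), a sampling-variance term controlled by matrix Bernstein concentration, and a graph-versus-geodesic term obtained by combining Lemma~\ref{lem:graph_geodesic} with the $\mathcal{O}(1/\varepsilon_p)$-Lipschitz property of the Gaussian kernel, all assembled by the triangle inequality. In fact, your write-up is more careful than the paper's sketch on two points it leaves implicit — the uniform lower bound on the random degrees $D_i$ and the need for a common functional framework (restriction/Nystr\"om operators) to make sense of $\|P_p - \mathcal{P}_{\varepsilon_p}\|_{\mathrm{op}}$ between a finite stochastic matrix and an $L^2(\mathcal{M})$ integral operator.
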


\begin{proof}
The proof decomposes the error into three components:
\begin{enumerate}
    \item \textbf{Bias term ($\mathcal{O}(\varepsilon_p^2)$)}: Using the Taylor expansion of the kernel, one can show \cite{coifman2006diffusion} that for smooth $f$,
    \begin{equation}
    \mathcal{P}_{\varepsilon_p} f(x) = f(x) + \frac{\varepsilon_p^2}{2} \left( \Delta_{\mathcal{M}} f(x) + 2\frac{\nabla p(x) \cdot \nabla f(x)}{p(x)} \right) + \mathcal{O}(\varepsilon_p^4),
    \end{equation}
    where $\Delta_{\mathcal{M}}$ is the Laplace-Beltrami operator. This yields $\|\mathcal{P}_{\varepsilon_p} - I\|_{\mathrm{op}} = \mathcal{O}(\varepsilon_p^2)$.

    \item \textbf{Variance term ($\mathcal{O}(1/\sqrt{n_s\varepsilon_p^{d/2}})$)}: For fixed $x \in \mathcal{M}$, the empirical estimate of $\int_{\mathcal{M}} k_{\varepsilon_p}(x,y) f(y) p(y) dV(y)$ using $n_s$ samples has variance $\mathcal{O}(1/(n_s\varepsilon_p^{d/2}))$. Applying matrix Bernstein inequalities \cite{tropp2012user} to the kernel matrix yields the concentration bound.

    \item \textbf{Graph approximation term ($\mathcal{O}((\log n_s/n_s)^{1/d}/\varepsilon_p)$)}: By Lemma~\ref{lem:graph_geodesic} and the Lipschitz property of the Gaussian kernel $|e^{-a^2} - e^{-b^2}| \leq 2|a-b|/\varepsilon_p$ for $a,b \geq 0$, the error from using $\operatorname{dist}G$ instead of $d{\mathcal{M}}$ is bounded.
\end{enumerate}
Combining these three bounds via the triangle inequality gives the result.
\end{proof}

\begin{corollary}[Eigenvector convergence]
Let $(\lambda_k, \phi_k)$ be the $k$-th eigenvalue-eigenfunction pair of $\mathcal{P}_{\varepsilon_p}$, and $(\hat{\lambda}_k, \hat{\phi}_k)$ the corresponding pair for $P_p$. Assuming a spectral gap $\gamma_k = \min_{j \neq k} |\lambda_k - \lambda_j| > 0$, the Davis-Kahan theorem \cite{yu2015useful} yields:
\begin{equation}
\|\hat{\phi}_k - \phi_k\|_{L^2(\mathcal{M})} \leq \frac{2\|P_p - \mathcal{P}_{\varepsilon_p}\|_{\mathrm{op}}}{\gamma_k}.
\end{equation}
Thus, with the optimal bandwidth choice $\varepsilon_p \asymp n_s^{-2/(d+8)}$, a convergence rate of $\mathcal{O}_p(n_s^{-4/(d+8)})$ is obtained for the eigenvectors.
\end{corollary}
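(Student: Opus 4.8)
The plan is to read the statement as a two-stage consequence of Theorem~\ref{thm:diffusion_convergence}: first a deterministic perturbation bound via Davis--Kahan, then a bandwidth optimization that balances the three error terms supplied by that theorem. Before invoking Davis--Kahan I would resolve the fact that $\hat\phi_k\in\mathbb{R}^{n_s}$ and $\phi_k\in L^2(\mathcal{M})$ nominally live in different spaces, so the inequality $\|\hat\phi_k-\phi_k\|_{L^2(\mathcal{M})}\le\cdots$ only makes sense after they are placed in a common space. The standard device is the Nystr\"om extension (equivalently, the evaluation/restriction map): one embeds the discrete eigenvector as a function on $\mathcal{M}$ through $\hat\phi_k(x)\propto \sum_j A(x,r_j)\,\hat\phi_k[j]$ with the induced degree normalization, so that both eigenpairs are compared as elements of $L^2(\mathcal{M})$ and the operator-norm discrepancy $\|P_p-\mathcal{P}_{\varepsilon_p}\|_{\mathrm{op}}$ bounded in Theorem~\ref{thm:diffusion_convergence} becomes exactly the relevant perturbation size.

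With both eigenfunctions in this common Hilbert space and the spectral gap $\gamma_k=\min_{j\neq k}|\lambda_k-\lambda_j|>0$ isolating $\lambda_k$, the variant of the Davis--Kahan $\sin\Theta$ theorem in \cite{yu2015useful} applies directly to the one-dimensional eigenspaces. After fixing the sign of $\hat\phi_k$ to align it with $\phi_k$, that version bounds the $L^2$ distance between the unit eigenvectors by $2\|P_p-\mathcal{P}_{\varepsilon_p}\|_{\mathrm{op}}/\gamma_k$, which is precisely the claimed inequality. This step is mechanical once the comparison space and the perturbation bound of Theorem~\ref{thm:diffusion_convergence} are in place.

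The quantitative rate then follows by substituting the three-term bound of Theorem~\ref{thm:diffusion_convergence} and choosing $\varepsilon_p$ to balance the dominant bias and variance contributions. Setting $C_1\varepsilon_p^2\asymp C_2\,(n_s\varepsilon_p^{d/2})^{-1/2}$ gives $\varepsilon_p^{2+d/4}\asymp n_s^{-1/2}$, i.e.\ $\varepsilon_p\asymp n_s^{-2/(d+8)}$, at which point the common value of these two terms is $\asymp n_s^{-4/(d+8)}$; dividing by the fixed gap $\gamma_k$ reproduces the claimed $\mathcal{O}_p(n_s^{-4/(d+8)})$ eigenvector rate.

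The main obstacle is the third, graph--geodesic term $C_3(\log n_s/n_s)^{1/d}/\varepsilon_p$ inherited from Lemma~\ref{lem:graph_geodesic}. At the optimized bandwidth its exponent is $(d-8)/(d(d+8))$ up to logarithmic factors, which is more negative than the target $-4/(d+8)$ only for small intrinsic dimension (essentially $d=1$); for $d\ge 2$ this term would dominate and inflate the rate. Making the stated rate rigorous therefore requires either sharpening the graph-distance estimate of Lemma~\ref{lem:graph_geodesic} (a faster covering rate, or treating the geodesic distance as exactly evaluated, so the term becomes genuinely lower order) or explicitly restricting to the low-intrinsic-dimension regime in which the bias--variance tradeoff alone governs convergence. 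I would flag this dependence rather than silently discarding the graph contribution, since it is the only place where the clean $n_s^{-4/(d+8)}$ scaling can fail.
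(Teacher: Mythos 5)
Your proposal follows essentially the same route as the paper, which offers no separate proof of this corollary: its justification there is precisely the inline application of Davis--Kahan to the operator perturbation bound of Theorem~\ref{thm:diffusion_convergence}, followed by choosing $\varepsilon_p$ to balance the bias term $C_1\varepsilon_p^2$ against the variance term $C_2(n_s\varepsilon_p^{d/2})^{-1/2}$, giving $\varepsilon_p\asymp n_s^{-2/(d+8)}$ and the rate $n_s^{-4/(d+8)}$ --- exactly the computation you reproduce. Your two refinements are both well taken and absent from the paper: (i) the discrete eigenvector $\hat\phi_k\in\mathbb{R}^{n_s}$ must indeed be lifted to $L^2(\mathcal{M})$ (e.g., via Nystr\"om extension) before the stated inequality is even meaningful, and (ii) at the optimized bandwidth the graph-geodesic term $C_3(\log n_s/n_s)^{1/d}/\varepsilon_p$ scales as $n_s^{(d-8)/(d(d+8))}$ up to logarithms, which dominates $n_s^{-4/(d+8)}$ for every $d\ge 2$, so the claimed rate follows from the paper's own three-term bound only when $d=1$, or when the graph-distance estimate of Lemma~\ref{lem:graph_geodesic} is sharpened or that error source is ignored. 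This is a gap in the paper's claim, not in your argument; flagging the term rather than silently discarding it is the correct call.
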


\subsubsection{Convergence of kernel ridge regression}

The nonlinear lifting $\mathcal{K}^{(p)}: \mathbb{R}^{r_p} \to \mathbb{R}^N$ is learned via KRR. For each output dimension $j = 1,\dots,N$, the following problem is solved:
\begin{equation}
\widehat{f}_j = \arg\min_{f \in \mathcal{H}_k} \left\{ \frac{1}{n_s} \sum_{i=1}^{n_s} |f(\phi_i) - r_{i,j}|^2 + \lambda \|f\|_{\mathcal{H}_k}^2 \right\},
\end{equation}
where $\phi_i \in \mathbb{R}^{r_p}$ are the estimated nonlinear coordinates, $r_{i,j}$ is the $j$-th component of the residual $r_i$, $\mathcal{H}_k$ is the reproducing kernel Hilbert space (RKHS) induced by a kernel $k$, and $\lambda > 0$ is the regularization parameter.

\begin{theorem}[Convergence of kernel ridge regression]\label{thm:krr_convergence}
Let the true nonlinear lifting function be denoted by $f_j^\dagger(\phi) = (\mathcal{K}^{(p)}(\phi))_j$ for $j = 1,\dots,N$. Assume that $f_j^\dagger$ is bounded, i.e., there exists $B > 0$ such that $\sup_{\phi \in \Phi} |f_j^\dagger(\phi)| \leq B$, and that it satisfies the source condition $f_j^\dagger = T_k^{\beta} g_j$ for some $g_j$ in the RKHS $\mathcal{H}_k$ with $\|g_j\|_{\mathcal{H}_k} \leq G$, where $\beta > 0$ and $T_k$ are the kernel integral operators defined by $[T_k f](\cdot) = \int_{\Phi} k(\cdot, \phi) f(\phi) d\rho(\phi)$. The kernel $k: \Phi \times \Phi \to \mathbb{R}$ is positive definite and bounded with $\sup_{\phi \in \Phi} k(\phi,\phi) \leq \kappa^2$, and the eigenvalues $\mu_\ell$ of $T_k$ decay polynomially as $\mu_\ell \asymp \ell^{-1/\tau}$ for some $\tau > 0$. The residuals satisfy $r_{i,j} = f_j^\dagger(\phi_i) + \xi_{i,j}$, where $\xi_{i,j}$ are independent zero-mean sub-Gaussian random variables with variance proxy $\sigma^2$, and the samples $\phi_i$ are drawn i.i.d. from a probability measure $\rho$ on $\Phi$.

Let $\widehat{f}_j$ be the KRR estimator with a regularization parameter $\lambda = \lambda(n_s) > 0$. Then for any $\delta \in (0,1)$, with probability at least $1 - \delta$,
\begin{equation}
\|\widehat{f}_j - f_j^\dagger\|_{L^2(\rho)} \leq C \left( \lambda^{\beta} + \frac{\sqrt{\log(1/\delta)}}{\lambda^{\tau}\sqrt{n_s}} \right),
\end{equation}
where $C > 0$ is a constant depending on $B, G, \kappa, \sigma$, and the spectral constants of $T_k$. In particular, choosing $\lambda \asymp n_s^{-1/(2(\beta+\tau))}$ yields the optimal convergence rate:
\begin{equation}
\|\widehat{f}_j - f_j^\dagger\|_{L^2(\rho)} = \mathcal{O}_p\left( n_s^{-\frac{\beta}{2(\beta+\tau)}} \sqrt{\log(1/\delta)} \right).
\end{equation}
\end{theorem}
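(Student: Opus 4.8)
The plan is to recast the estimator in the integral-operator formalism of spectral regularization and perform the classical bias--variance decomposition. First I would introduce the population operator $T_k$ on $L^2(\rho)$ already defined in the statement, its empirical counterpart $\widehat{T}_{n_s}=\frac{1}{n_s}\sum_{i=1}^{n_s}k(\cdot,\phi_i)\otimes k(\cdot,\phi_i)$, and the empirical data functional $\widehat{g}=\frac{1}{n_s}\sum_{i=1}^{n_s}r_{i,j}\,k(\cdot,\phi_i)$, so that the KRR solution has the closed form $\widehat{f}_j=(\widehat{T}_{n_s}+\lambda I)^{-1}\widehat{g}$. Introducing the noise-free population-regularized reference $f_\lambda:=(T_k+\lambda I)^{-1}T_k f_j^\dagger$, the triangle inequality gives
\begin{equation}
\|\widehat{f}_j-f_j^\dagger\|_{L^2(\rho)}\;\le\;\underbrace{\|f_\lambda-f_j^\dagger\|_{L^2(\rho)}}_{\text{approximation (bias)}}\;+\;\underbrace{\|\widehat{f}_j-f_\lambda\|_{L^2(\rho)}}_{\text{estimation (variance)}},
\end{equation}
and the whole proof reduces to bounding the two terms separately and optimizing $\lambda$.

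For the bias term I would use the source condition $f_j^\dagger=T_k^\beta g_j$ to write $f_\lambda-f_j^\dagger=-\lambda(T_k+\lambda I)^{-1}T_k^\beta g_j$, and then invoke the spectral calculus for the filter function $\mu\mapsto \lambda\mu^\beta/(\mu+\lambda)$. Since $\sup_{\mu>0}\lambda\mu^\beta/(\mu+\lambda)\le \lambda^{\min(\beta,1)}$, and $\|g_j\|_{L^2(\rho)}\le\kappa\|g_j\|_{\mathcal H_k}\le\kappa G$ by boundedness of the kernel, the approximation error is of order $\lambda^{\beta}$ for $0<\beta\le 1$ (the boundedness hypothesis $\sup_\phi|f_j^\dagger|\le B$ only serves to keep the saturation constant uniform). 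This is exactly the claimed $\lambda^\beta$ contribution.

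For the variance term I would expand $\widehat{f}_j-f_\lambda$ using the model $r_{i,j}=f_j^\dagger(\phi_i)+\xi_{i,j}$, splitting it into a stochastic-noise part driven by $\widehat{\Xi}=\frac{1}{n_s}\sum_i\xi_{i,j}\,k(\cdot,\phi_i)$ and an operator-perturbation part driven by $\widehat{T}_{n_s}-T_k$. The sub-Gaussian noise with proxy $\sigma^2$ together with $\sup_\phi k(\phi,\phi)\le\kappa^2$ allows a Hilbert-space Bernstein concentration for $\widehat{\Xi}$, while a matching operator-Bernstein bound controls $\|(\widehat{T}_{n_s}+\lambda I)^{-1/2}(T_k+\lambda I)^{1/2}\|_{\mathrm{op}}=\mathcal{O}_p(1)$, transferring the population spectral weights onto the random inverse that actually appears in the estimator. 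The scale of these fluctuations is governed by the effective dimension $\mathcal{N}(\lambda)=\operatorname{tr}\bigl(T_k(T_k+\lambda I)^{-1}\bigr)$; the polynomial eigenvalue decay $\mu_\ell\asymp\ell^{-1/\tau}$ yields $\mathcal{N}(\lambda)\asymp\lambda^{-\tau}$, which converts the generic concentration estimate into a sample-error contribution of order $\lambda^{-\tau}\sqrt{\log(1/\delta)/n_s}$ on the stated confidence event.

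Combining the two bounds and taking a union bound over the confidence events gives $\|\widehat{f}_j-f_j^\dagger\|_{L^2(\rho)}\le C\bigl(\lambda^{\beta}+\lambda^{-\tau}\sqrt{\log(1/\delta)/n_s}\bigr)$, and minimizing the right-hand side in $\lambda$ balances the two monomials at $\lambda\asymp n_s^{-1/(2(\beta+\tau))}$, producing the final rate $\mathcal{O}_p\bigl(n_s^{-\beta/(2(\beta+\tau))}\sqrt{\log(1/\delta)}\bigr)$. I expect the main obstacle to be the variance step: one must obtain concentration whose $\lambda$-dependence is dictated by $\mathcal{N}(\lambda)$ rather than by the crude operator norm $\|(\widehat{T}_{n_s}+\lambda I)^{-1}\|_{\mathrm{op}}\le\lambda^{-1}$, since the random regularized inverse couples the noise fluctuation and the empirical-to-population operator perturbation at the scale $\lambda$, and a naive bound would lose the sharp $\tau$-exponent. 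A secondary subtlety worth flagging is that the coordinates $\phi_i$ are themselves outputs of the diffusion embedding; the theorem treats them as exact i.i.d.\ draws from $\rho$, so a fully end-to-end guarantee would compose this bound with the embedding error from the preceding corollary.
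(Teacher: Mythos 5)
Your proposal follows essentially the same route as the paper's proof: the Caponnetto--De Vito bias--variance decomposition around the population-regularized solution $f_\lambda$, the source condition yielding the $\lambda^{\beta}$ bias, and Bernstein-type concentration for the empirical operator $\widehat{T}_{n_s}-T_k$ and the noise functional yielding the $\lambda^{-\tau}/\sqrt{n_s}$ variance, followed by balancing $\lambda \asymp n_s^{-1/(2(\beta+\tau))}$. The extra precision you supply --- the effective dimension $\mathcal{N}(\lambda)\asymp\lambda^{-\tau}$, the operator-ratio bound $\|(\widehat{T}_{n_s}+\lambda I)^{-1/2}(T_k+\lambda I)^{1/2}\|_{\mathrm{op}}=\mathcal{O}_p(1)$, and the caveat that Tikhonov saturation limits the $\lambda^{\beta}$ bias bound to $\beta\le 1$ --- are refinements the paper's sketch glosses over, not a different method.
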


\begin{proof}
The proof follows the standard bias-variance decomposition for kernel ridge regression \cite{caponnetto2007optimal}. Define the population risk minimizer:
\begin{equation}
f_\lambda = \arg\min_{f \in \mathcal{H}_k} \left\{ \|f - f_j^\dagger\|_{L^2(\rho)}^2 + \lambda \|f\|_{\mathcal{H}_k}^2 \right\}.
\end{equation}
The error decomposes as $\|\widehat{f}_j - f_j^\dagger\| \leq \|\widehat{f}_j - f_\lambda\| + \|f_\lambda - f_j^\dagger\|$.

\textbf{Bias term}: From the source condition $f_j^\dagger = T_k^{\beta} g_j$, the following bound is obtained:
\begin{equation}
\|f_\lambda - f_j^\dagger\|_{L^2(\rho)} \leq \|\lambda(T_k + \lambda I)^{-1} f_j^\dagger\|_{\mathcal{H}_k} \leq G \lambda^{\beta}.
\end{equation}

\textbf{Variance term}: The empirical error $\|\widehat{f}_j - f_\lambda\|$ can be bounded using concentration inequalities for operators in RKHS. Following \cite{smale2007learning}, define the empirical operator $\widehat{T}_k = \frac{1}{n_s} \sum_{i=1}^{n_s} k_{\phi_i} \otimes k_{\phi_i}$ and its population counterpart $T_k$. Then:
\begin{equation}
\widehat{f}_j - f_\lambda = (\widehat{T}_k + \lambda I)^{-1} \left( \frac{1}{n_s} \sum_{i=1}^{n_s} (r_{i,j} - f_\lambda(\phi_i)) k_{\phi_i} \right) - \lambda(T_k + \lambda I)^{-1} f_\lambda.
\end{equation}
Applying matrix Bernstein inequalities to $\widehat{T}_k - T_k$ and the noise term yields the variance bound $\mathcal{O}(1/(\lambda^{\tau}\sqrt{n_s}))$ with high probability.

Combining bias and variance gives the stated bound.
\end{proof}

\begin{corollary}[Vector-valued KRR convergence]
For the full vector-valued lifting $\mathcal{K}^{(p)}$, the following is obtained:
\begin{equation}
\|\widehat{\mathcal{K}}^{(p)} - \mathcal{K}^{(p)}\|_{\mathcal{L}} := \sup_{\|\phi\|_2=1} \|\widehat{\mathcal{K}}^{(p)}(\phi) - \mathcal{K}^{(p)}(\phi)\|_2 = \mathcal{O}_p\left( \sqrt{N} n_s^{-\frac{\beta}{2(\beta+\tau)}} \sqrt{\log(N/\delta)} \right).
\end{equation}
\end{corollary}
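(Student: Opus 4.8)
The plan is to derive the vector-valued bound directly from the scalar Theorem~\ref{thm:krr_convergence} by combining a union bound across the $N$ output coordinates with a coordinatewise aggregation of the errors. First I would record that each component map $\widehat{f}_j$ is, by construction, a scalar KRR estimator sharing the common inputs $\{\phi_i\}_{i=1}^{n_s}$ and kernel $k$ but driven by the coordinate noise $\xi_{i,j}$. Under the hypotheses of Theorem~\ref{thm:krr_convergence} (boundedness, the source condition with exponent $\beta$, polynomial eigenvalue decay with rate $\tau$, and sub-Gaussian noise), each $\widehat{f}_j$ individually satisfies the rate $n_s^{-\beta/(2(\beta+\tau))}\sqrt{\log(1/\delta)}$ with probability at least $1-\delta$, and the controlling constant $C$ is uniform in $j$ since it depends only on $B,G,\kappa,\sigma$ and the spectral constants of $T_k$, which are shared across coordinates.

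Next I would upgrade the single-coordinate statement to a simultaneous one. Because the corollary requires a bound holding for all $N$ coordinates at once, I would apply a union bound: invoking Theorem~\ref{thm:krr_convergence} with the per-coordinate confidence level $\delta/N$ guarantees that all $N$ scalar bounds hold on a common event of probability at least $1-\delta$. The union bound needs no independence across coordinates, so the correlations among $\xi_{i,j}$ for fixed $i$ are harmless. This substitution is exactly what turns the $\sqrt{\log(1/\delta)}$ factor into $\sqrt{\log(N/\delta)}$ while leaving the polynomial-in-$n_s$ rate exponent untouched.

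With all coordinates controlled simultaneously, I would aggregate. For any test point $\phi$ the vector error decomposes as $\|\widehat{\mathcal{K}}^{(p)}(\phi)-\mathcal{K}^{(p)}(\phi)\|_2^2 = \sum_{j=1}^N |\widehat{f}_j(\phi)-f_j^\dagger(\phi)|^2$; bounding each of the $N$ summands by the common per-coordinate rate and taking the square root introduces the $\sqrt{N}$ prefactor, and then taking the supremum over $\|\phi\|_2=1$ yields the claimed $\mathcal{O}_p\bigl(\sqrt{N}\,n_s^{-\beta/(2(\beta+\tau))}\sqrt{\log(N/\delta)}\bigr)$.

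The main obstacle I anticipate is reconciling the norm in which Theorem~\ref{thm:krr_convergence} is phrased with the operator norm $\|\cdot\|_{\mathcal{L}}$ of the corollary: the scalar theorem controls the $L^2(\rho)$ error, whereas $\|\cdot\|_{\mathcal{L}}$ is a uniform norm (a supremum over unit inputs), which is genuinely stronger. To bridge this gap I would pass through the RKHS norm rather than the $L^2(\rho)$ norm. The reproducing property together with the boundedness hypothesis $\sup_\phi k(\phi,\phi)\le\kappa^2$ gives the pointwise control $|g(\phi)|\le\sqrt{k(\phi,\phi)}\,\|g\|_{\mathcal{H}_k}\le\kappa\|g\|_{\mathcal{H}_k}$, so an $\mathcal{H}_k$-norm version of the per-coordinate bound (which the analysis of~\cite{caponnetto2007optimal} underlying Theorem~\ref{thm:krr_convergence} also supplies, at the same rate up to constants) converts immediately into a uniform pointwise bound. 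Care is needed to verify that absorbing $\kappa$ into $C$ is done uniformly in $j$, so that neither the union bound nor the $\sqrt{N}$ aggregation inadvertently inflates the stated rate.
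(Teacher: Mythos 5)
Your core argument is exactly the paper's proof: apply Theorem~\ref{thm:krr_convergence} coordinatewise with confidence level $\delta/N$, take a union bound (which produces the $\sqrt{\log(N/\delta)}$ factor), and obtain the $\sqrt{N}$ prefactor by summing squared coordinate errors before taking the square root. The paper's own proof is just those two sentences, so on the main line you and the paper coincide.

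Where you go beyond the paper is the final paragraph, and there both credit and a caution are due. You are right that there is a genuine norm mismatch the paper silently ignores: the theorem controls $\|\widehat{f}_j - f_j^\dagger\|_{L^2(\rho)}$, while the corollary's $\|\cdot\|_{\mathcal{L}}$ is a supremum over unit inputs, i.e.\ a uniform-type norm, and an $L^2(\rho)$ bound does not imply a pointwise one. However, your proposed bridge overstates what the underlying analysis supplies: it is not true in general that the RKHS-norm error converges ``at the same rate up to constants.'' Under the source condition $f_j^\dagger = T_k^{\beta}g_j$, the bias in $\mathcal{H}_k$-norm scales like $\lambda^{\beta-1/2}$ rather than $\lambda^{\beta}$ (and requires $\beta \ge 1/2$ to be meaningful), and the variance term likewise picks up an extra factor of order $\lambda^{-1/2}$; consequently the uniform-norm rate obtained via $|g(\phi)|\le \kappa\|g\|_{\mathcal{H}_k}$ is strictly slower than the $L^2(\rho)$ rate quoted in the corollary. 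So your repair, as stated, would prove a weaker rate than the one claimed; closing the gap at the stated rate would require either interpolation-space arguments (controlling the $[\mathcal{H}_k]^{s}$ norms for $s<1$ and embedding assumptions into $L^\infty$) or reinterpreting the corollary's norm as an $L^2(\rho)$-type quantity, which is evidently what the paper implicitly does.
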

\begin{proof}
Apply Theorem~\ref{thm:krr_convergence} to each output dimension $j = 1,\dots,N$ and take a union bound. The factor $\sqrt{N}$ arises from summing the squared errors over dimensions.
\end{proof}

The combination of Theorem~\ref{thm:diffusion_convergence} and Theorem~\ref{thm:krr_convergence} ensures that the nonlinear component of PPMD consistently recovers the underlying manifold structure and provides accurate predictions for the residual field across the parameter space.

\subsection{Overall convergence of PPMD}

Combining linear and nonlinear components, the PPMD reconstruction operator $\widehat{\mathcal{R}}_{PPMD}(\mu)$ converges to the true solution $U(\mu)$ as sample sizes increase. The error decomposes as:
\begin{equation}
\|\widehat{\mathcal{R}}_{PPMD}(\mu) - U(\mu)\|_2 \leq E_{\mathrm{linear}} + E_{\mathrm{nonlinear}} + E_{\mathrm{repr}},
\end{equation}
where:
\begin{itemize}
\item $E_{\mathrm{linear}} = \mathcal{O}(n_s^{-1/2})$ from linear subspace approximation,
\item $E_{\mathrm{nonlinear}} = \mathcal{O}(n_s^{-2/(d+8)})$ from manifold learning and KRR,
\item $E_{\mathrm{repr}}$ is the irreducible representation error.
\end{itemize}

Thus, with optimal parameter scaling $\varepsilon_p \asymp n_s^{-2/(d+8)}$, PPMD achieves consistent parametric reduced-order modeling, converging to the best approximation within the chosen model class.

\section{Illustrative numerical examples}\label{sec:experiments}
In this section, the capability of the PPMD framework is demonstrated using two viscosity-parametrized flow problems:  2D past a circular cylinder and a Backward facing step. Full-order solutions for these parameter-dependent problems are generated using the finite element fluid solver Fluidity \cite{pain2005three}.  

For both test cases, unstructured triangular meshes are employed with sufficient resolution to ensure accurate solutions across the entire parameter range. The high-fidelity snapshot data obtained from these simulations are used to construct the PPMD reduced-order model, which is subsequently applied for prediction and reconstruction tasks across the parameter domain.  The performance of the PPMD is compared with a conventional POD-based ROM to highlight the advantages of manifold learning and probabilistic augmentation techniques.


\subsection{Case 1: Flow past a cylinder}

The \textit{flow past a cylinder} serves as a benchmark problem, offering a canonical setting to study parameter-dependent flow separation, vortex shedding, and wake dynamics. 

The computational domain is a two-dimensional channel of size \(1.8 \times 0.41\), with a circular cylinder of radius \(R = 0.05\) centered at \((0.2, 0.2)\). The flow is governed by the incompressible Navier–Stokes equations with constant density \(\rho = 1\). The dynamic viscosity \(\mu\) is treated as a varying parameter, ranging from \(\mu_{\min} = 2.0 \times 10^{-5}\) to \(\mu_{\max} = 1.01 \times 10^{-3}\) with a uniform increment of \(1 \times 10^{-5}\), yielding 100 distinct parameter values. The corresponding Reynolds numbers range from approximately \(\mathrm{Re} \approx 100\) to \(\mathrm{Re} \approx 5000\), where \(\mathrm{Re} = \rho U D / \mu\), \(U = 1.0\) is the inflow velocity, and \(D = 2R = 0.1\) is the cylinder diameter.

\begin{figure}[tbhp]
\centering
\begin{tabular}{cc}
\begin{minipage}{0.45\linewidth}
\includegraphics[width=\linewidth]{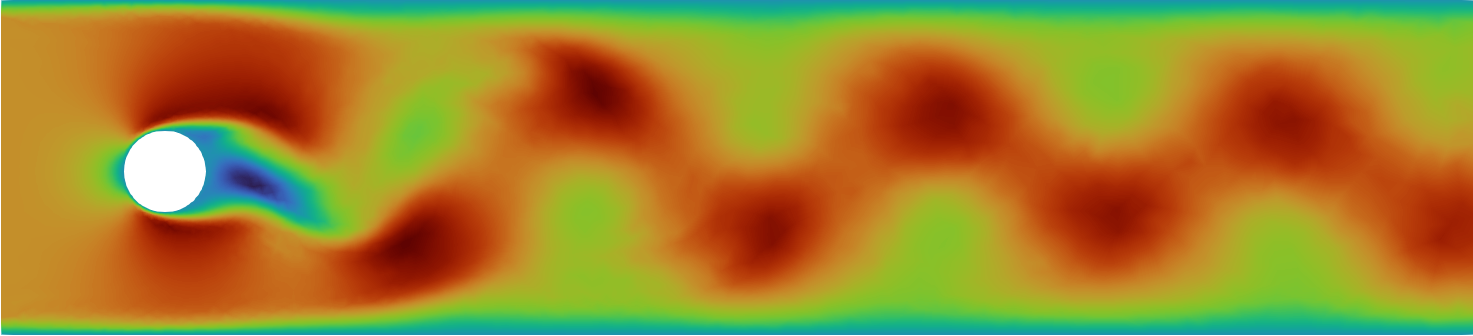}
\end{minipage} &
\begin{minipage}{0.45\linewidth}
\includegraphics[width=\linewidth]{flow4e-4full.png}
\end{minipage}\\
(a) {\small Full model, $t = 15s, Re = 250$} & (b) {\small Full model, $t = 15s, Re = 250$} \\
\begin{minipage}{0.45\linewidth}
\includegraphics[width=\linewidth]{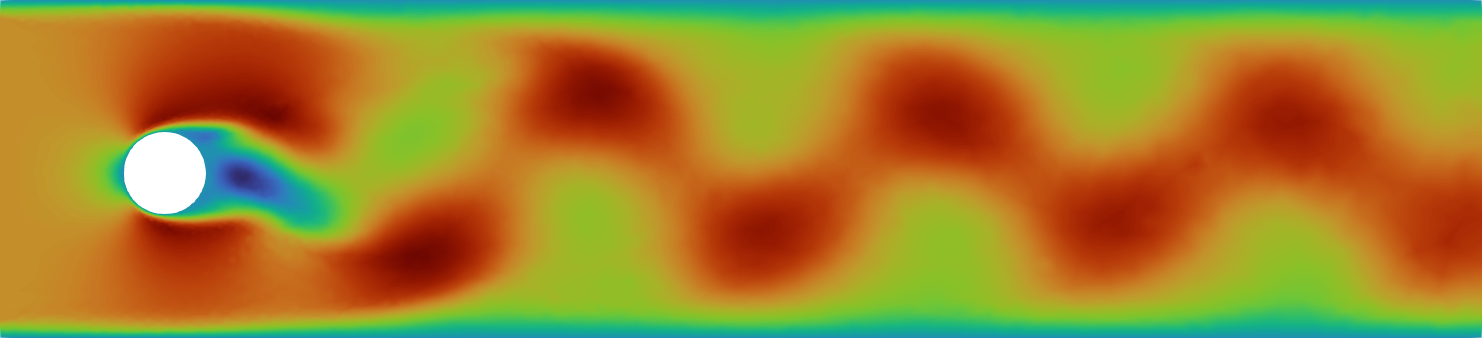}
\end{minipage} &
\begin{minipage}{0.45\linewidth}
\includegraphics[width=\linewidth]{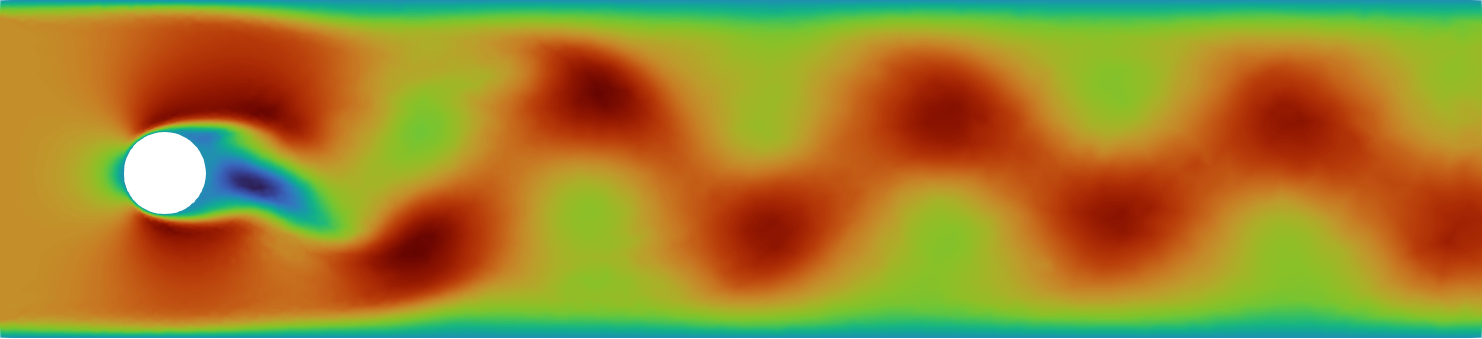}
\end{minipage} \\
(c) {\small POD+GPR, $r = 6$} & (d) {\small POD+GPR, $r = 12$} \\
\begin{minipage}{0.45\linewidth}
\includegraphics[width=\linewidth]{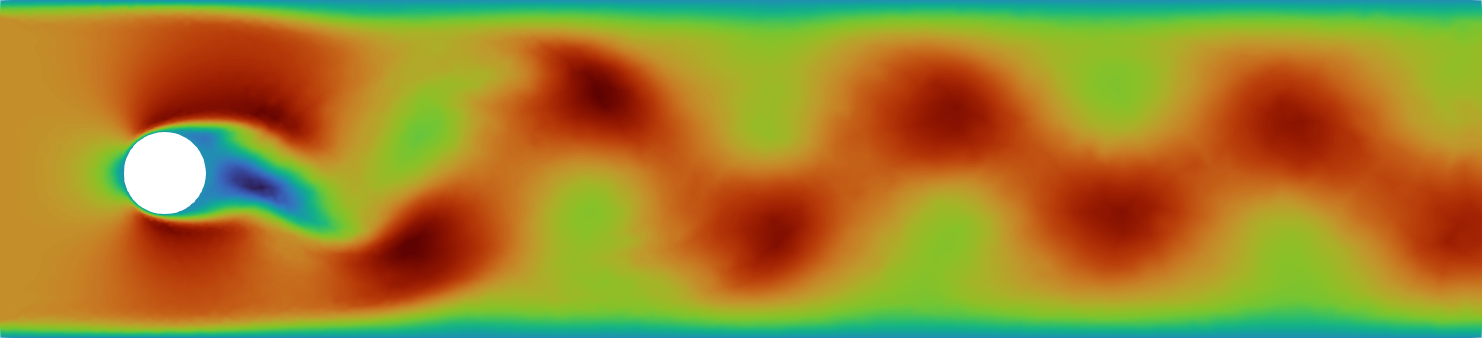}
\end{minipage} &
\begin{minipage}{0.45\linewidth}
\includegraphics[width=\linewidth]{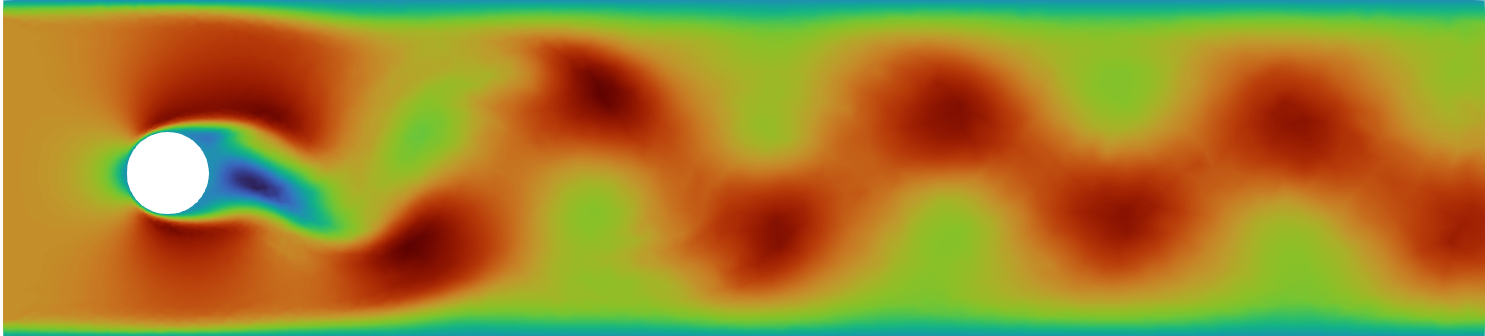}
\end{minipage} \\
(e) {\small PPMD, $r = 6$} & (f) {\small PPMD, $r = 12$} \\
\begin{minipage}{0.45\linewidth} 
\includegraphics[width = \linewidth,angle=0,clip=true]{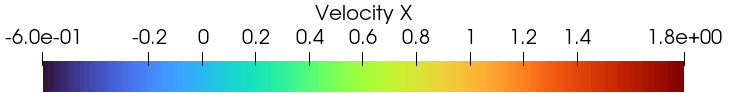}
\end{minipage}
&
\begin{minipage}{0.45\linewidth} 
\includegraphics[width = \linewidth,angle=0,clip=true]{scale1.png}
\end{minipage}\\
\end{tabular}
\caption{Flow past a cylinder at Re=250: Comparison of velocity reconstruction solutions at $t=15s$ from the high-fidelity full model, POD+GPR, and PPMD using 6 and 12 basis functions.}
\label{fg:flowreconstruct}
\end{figure}

\begin{figure}[tbhp]
\centering
\begin{tabular}{cc}
\begin{minipage}{0.45\linewidth}
\includegraphics[width=\linewidth]{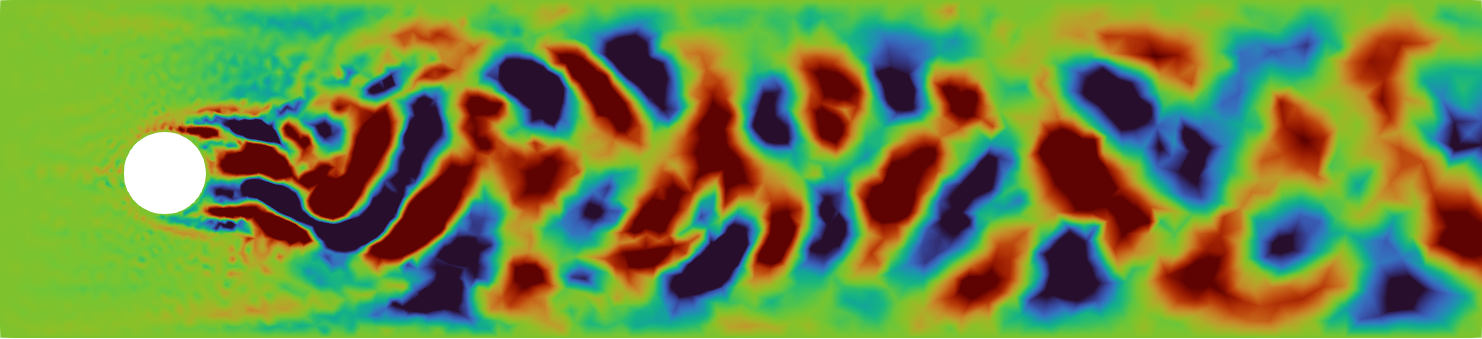}
\end{minipage} &
\begin{minipage}{0.45\linewidth}
\includegraphics[width=\linewidth]{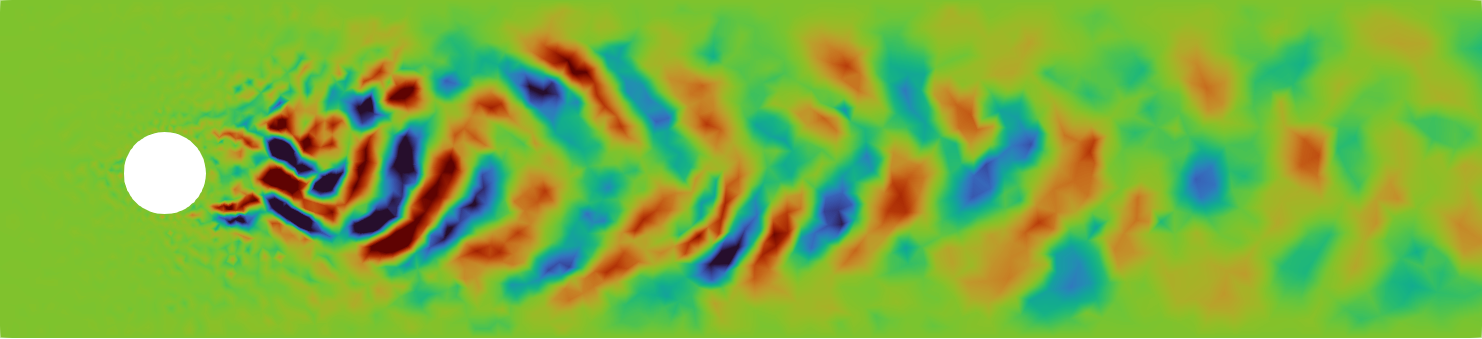}
\end{minipage} \\
(a) {\small POD+GPR, $r = 6$} & (b) {\small POD+GPR, $r = 12$} \\

\begin{minipage}{0.45\linewidth}
\includegraphics[width=\linewidth]{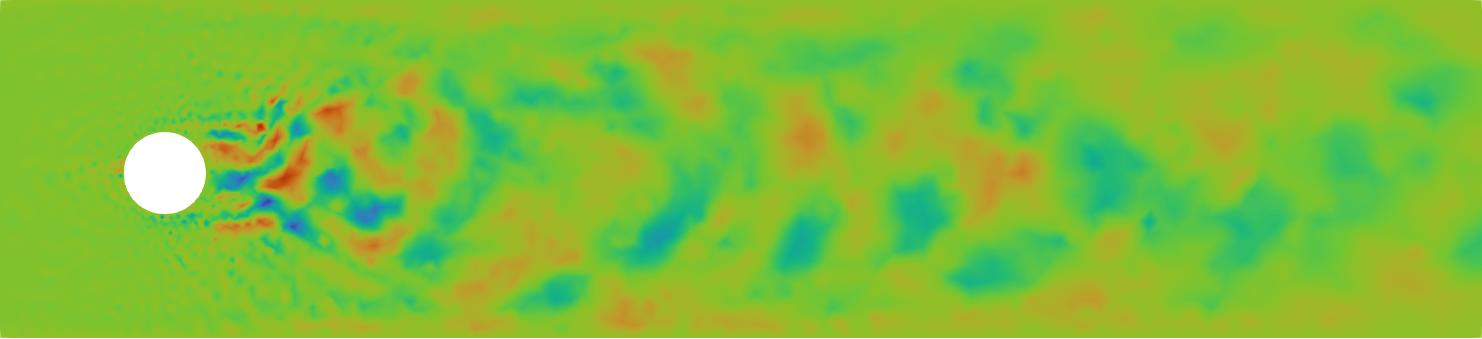}
\end{minipage} &
\begin{minipage}{0.45\linewidth}
\includegraphics[width=\linewidth]{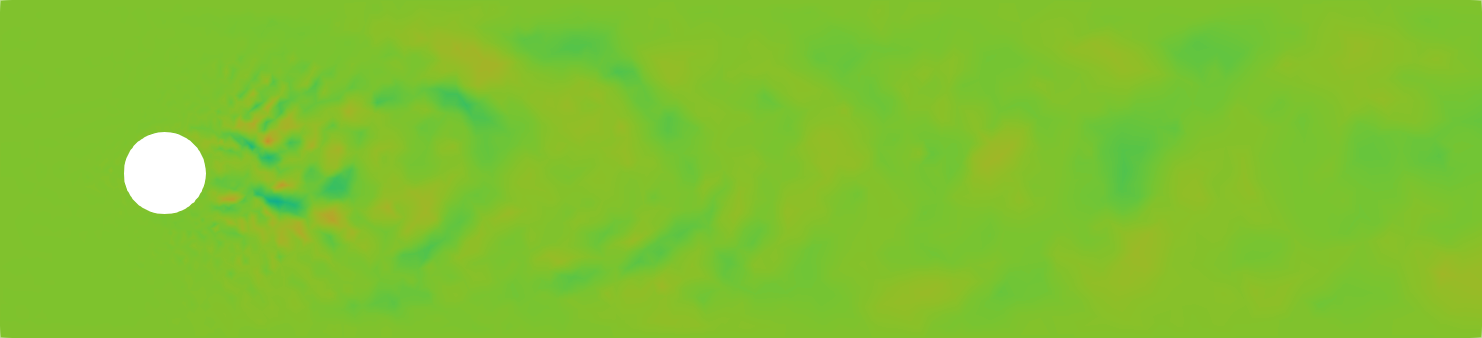}
\end{minipage} \\
(c) {\small PPMD, $r = 6$} & (d) {\small PPMD, $r = 12$} \\

\begin{minipage}{0.45\linewidth} 
\includegraphics[width = \linewidth,angle=0,clip=true]{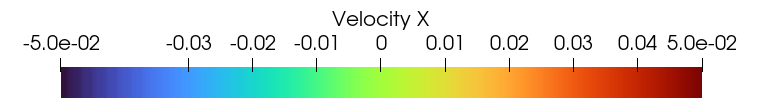}
\end{minipage}
&
\begin{minipage}{0.45\linewidth} 
\includegraphics[width = \linewidth,angle=0,clip=true]{scale2.png}
\end{minipage}\\

\end{tabular}
\caption{Flow past a cylinder: reconstruction errors at Re=250 and $t=15s$ for POD+GPR and PPMD using 6 and 12 basis functions.}
\label{fg:flowerror1}
\end{figure}

All simulations are initialized with a uniform velocity field \((u, v) = (0.1, 0.1)\). A Dirichlet inflow condition \((u, v) = (1.0, 0.0)\) is imposed at the left boundary, while no-slip conditions \((u, v) = (0, 0)\) are applied on the top and bottom walls and the cylinder surface. A pressure-Poisson-based outflow condition is used at the right boundary.

For each parameter value, the system is evolved over the time interval \(t \in [0, 15]\) seconds with a fixed time step \(\Delta t = 0.1\), resulting in 150 snapshots per simulation. The spatial discretization uses a mesh with 3802 nodes. The full dataset thus comprises 100 parameter samples, each with 150 snapshots. For training the PPMD model, the first 90 parameter values and their associated snapshots are used. The remaining 10 parameter values are reserved for out-of-sample prediction and error assessment. Additionally, a cubic smoothing spline is fitted to the 100 parameter-snapshot sets to obtain a continuous representation of the solution over the entire parameter interval, enabling reconstruction and prediction at arbitrary parameter values within the range.

\begin{figure}[tbhp]
\centering
\begin{tabular}{cc}
\begin{minipage}{0.45\linewidth}
\includegraphics[width=\linewidth]{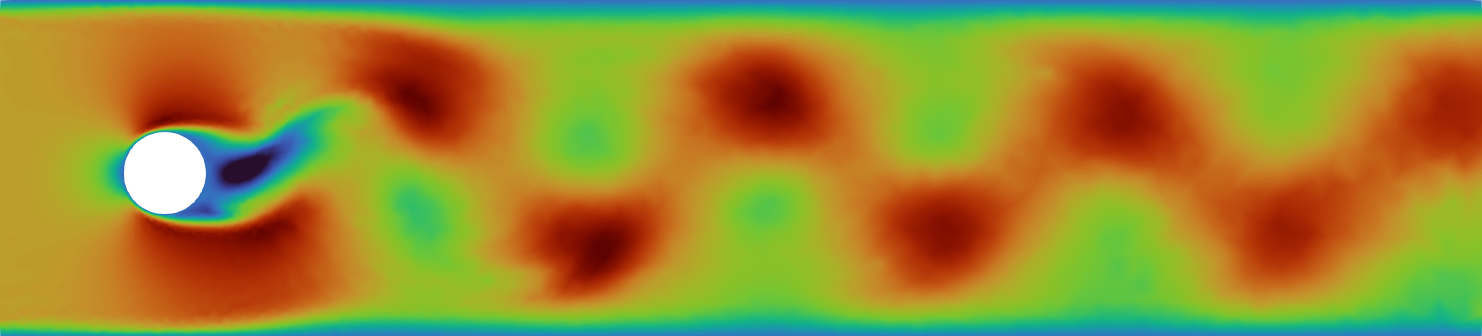}
\end{minipage} &
\begin{minipage}{0.45\linewidth}
\includegraphics[width=\linewidth]{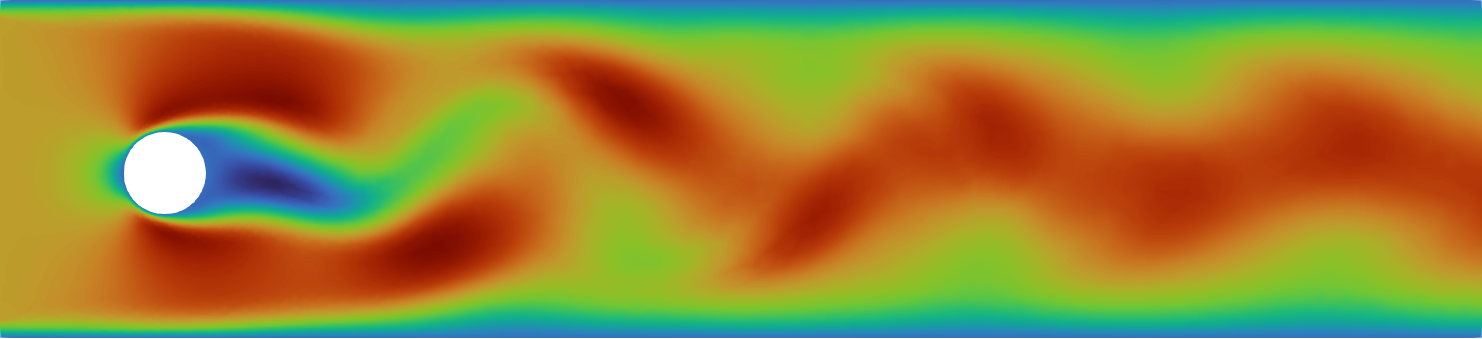}
\end{minipage} \\
(a) {\small Full model, $t = 15s, Re = 392$} & (b) {\small Full model, $t = 15s, Re = 104$} \\

\begin{minipage}{0.45\linewidth}
\includegraphics[width=\linewidth]{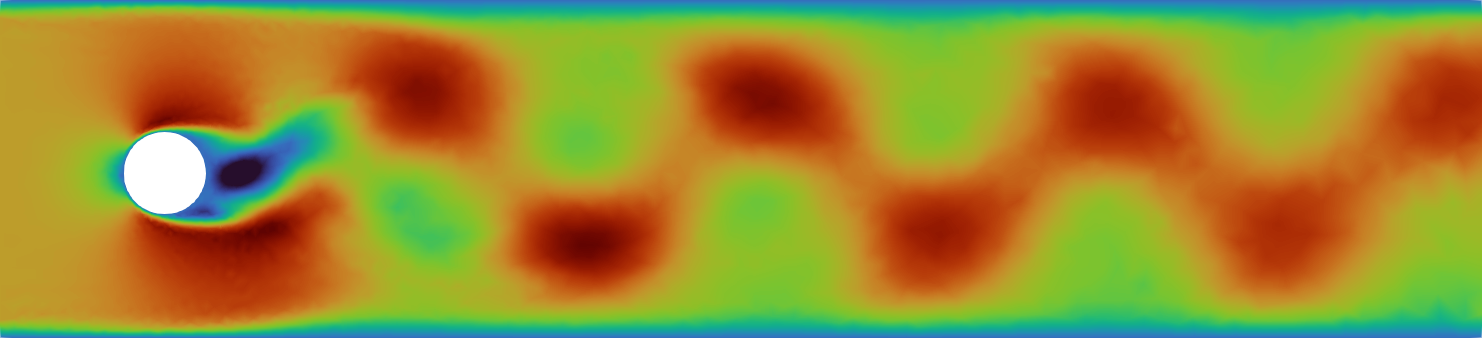}
\end{minipage} &
\begin{minipage}{0.45\linewidth}
\includegraphics[width=\linewidth]{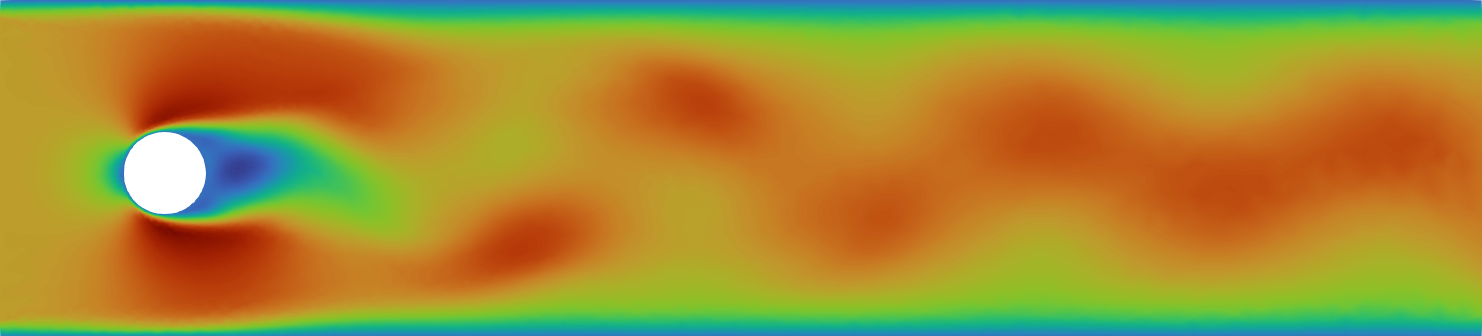}
\end{minipage} \\
(c) {\small POD+GPR, $r = 6, Re = 392$} & (d) {\small POD+GPR, $r = 6, Re = 104$} \\

\begin{minipage}{0.45\linewidth}
\includegraphics[width=\linewidth]{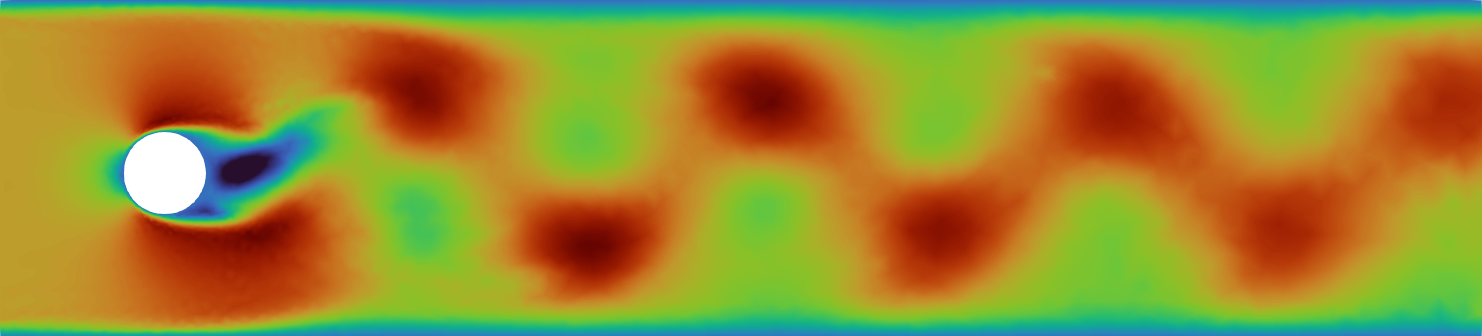}
\end{minipage} &
\begin{minipage}{0.45\linewidth}
\includegraphics[width=\linewidth]{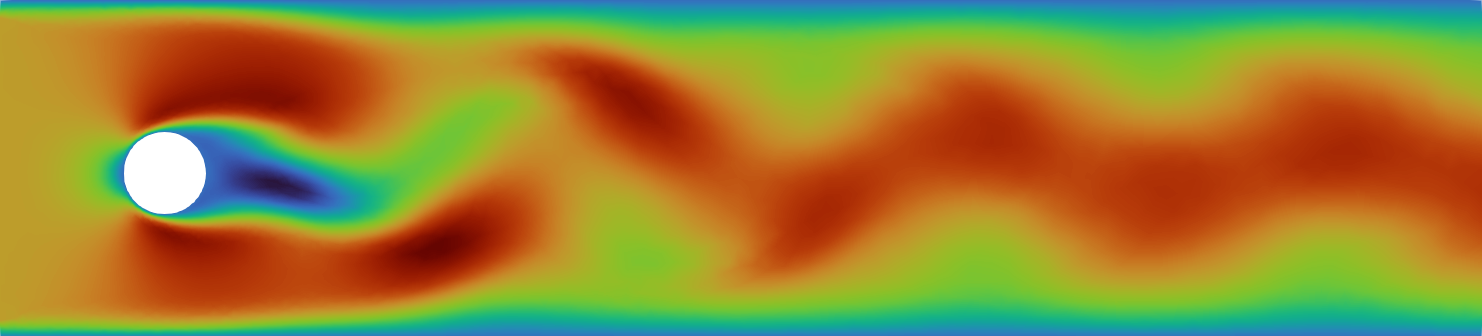}
\end{minipage} \\
(e) {\small PPMD, $r = 6, Re = 392$} & (f) {\small PPMD, $r = 6, Re = 104$} \\

\begin{minipage}{0.45\linewidth}
\includegraphics[width=\linewidth]{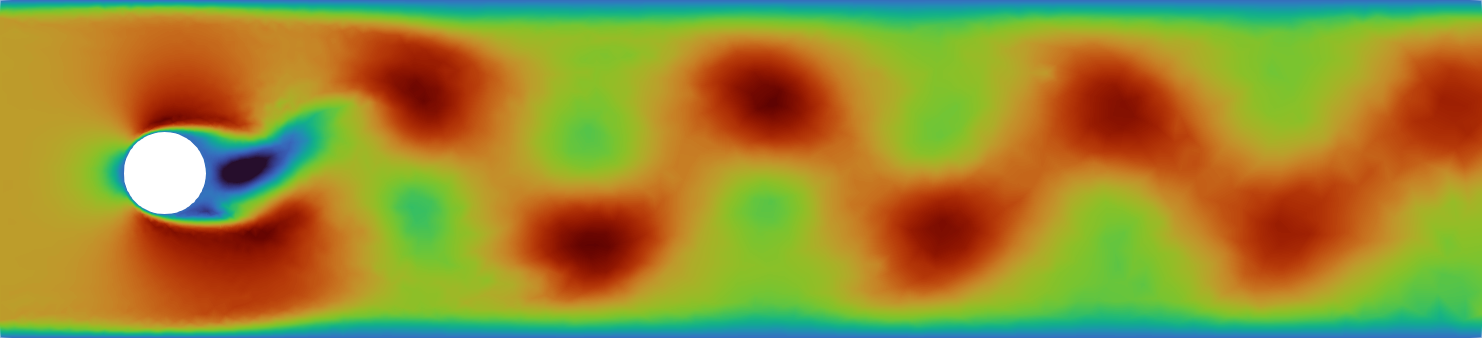}
\end{minipage} &
\begin{minipage}{0.45\linewidth}
\includegraphics[width=\linewidth]{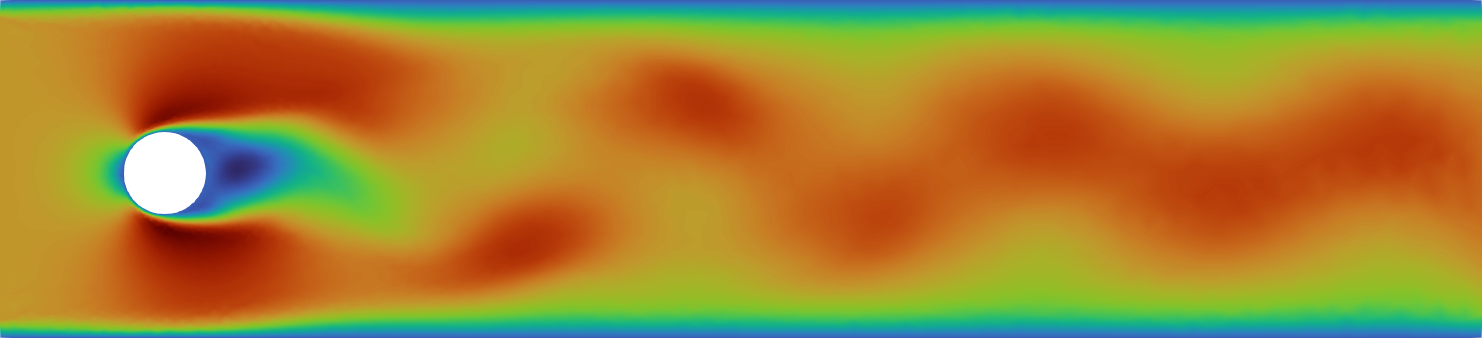}
\end{minipage} \\
(g) {\small POD+GPR, $r = 12, Re = 392$} & (h) {\small POD+GPR, $r = 12, Re = 104$} \\

\begin{minipage}{0.45\linewidth}
\includegraphics[width=\linewidth]{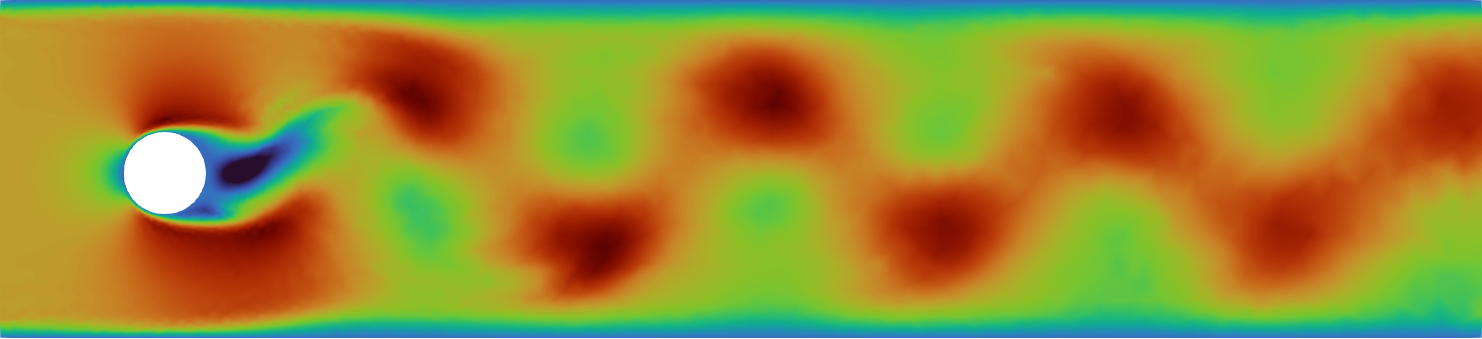}
\end{minipage} &
\begin{minipage}{0.45\linewidth}
\includegraphics[width=\linewidth]{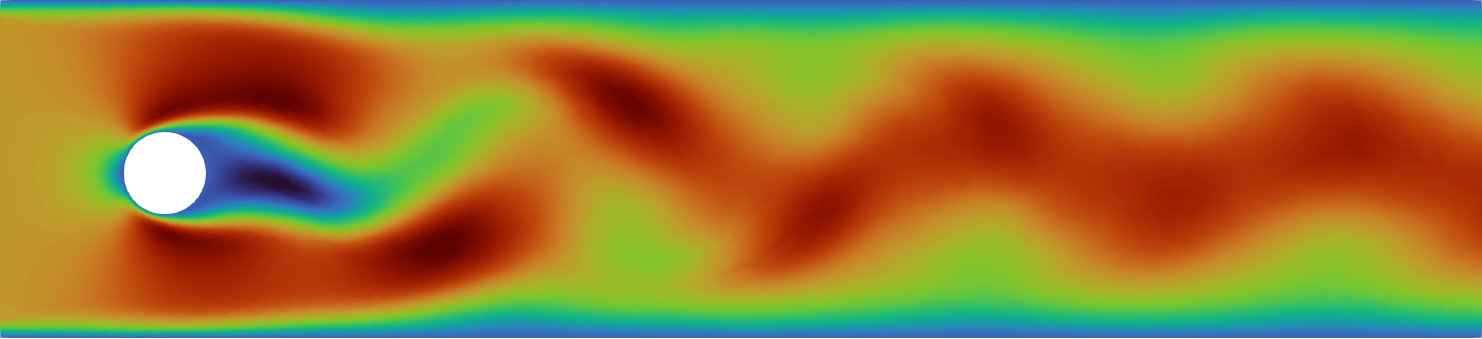}
\end{minipage} \\
(i) {\small PPMD, $r = 12, Re = 392$} & (j) {\small PPMD, $r = 12, Re = 104$} \\

\begin{minipage}{0.45\linewidth} 
\includegraphics[width = \linewidth,angle=0,clip=true]{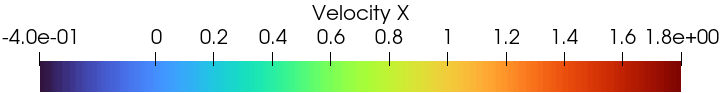}
\end{minipage}
&
\begin{minipage}{0.45\linewidth} 
\includegraphics[width = \linewidth,angle=0,clip=true]{scale3.png}
\end{minipage}\\

\end{tabular}
\caption{Flow past a cylinder: velocity solutions at $Re=392$ and $Re=104$ at $t=15s$ from the high-fidelity full model, POD+GPR, and PPMD using 6 and 12 basis functions.}
\label{fg:flowpredict}
\end{figure}

\begin{figure}[tbhp]
\centering
\begin{tabular}{cc}
\begin{minipage}{0.45\linewidth}
\includegraphics[width=\linewidth]{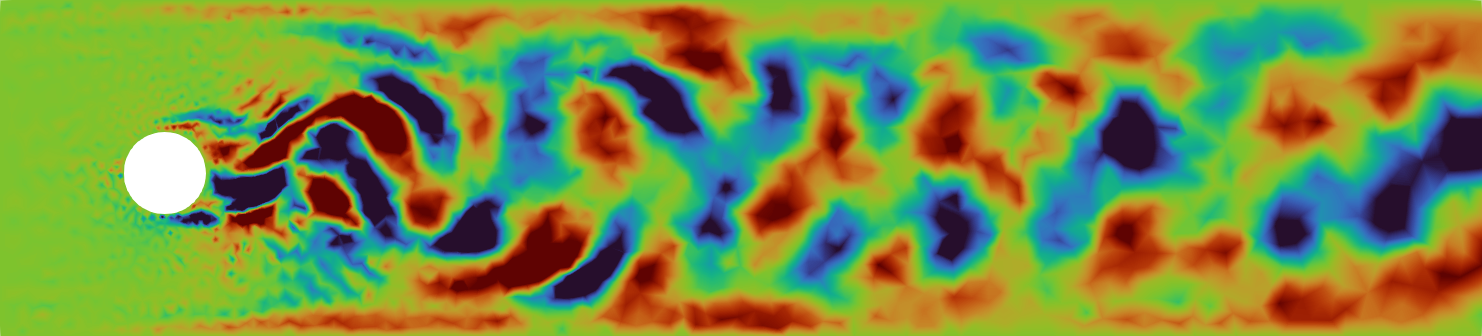}
\end{minipage} &
\begin{minipage}{0.45\linewidth}
\includegraphics[width=\linewidth]{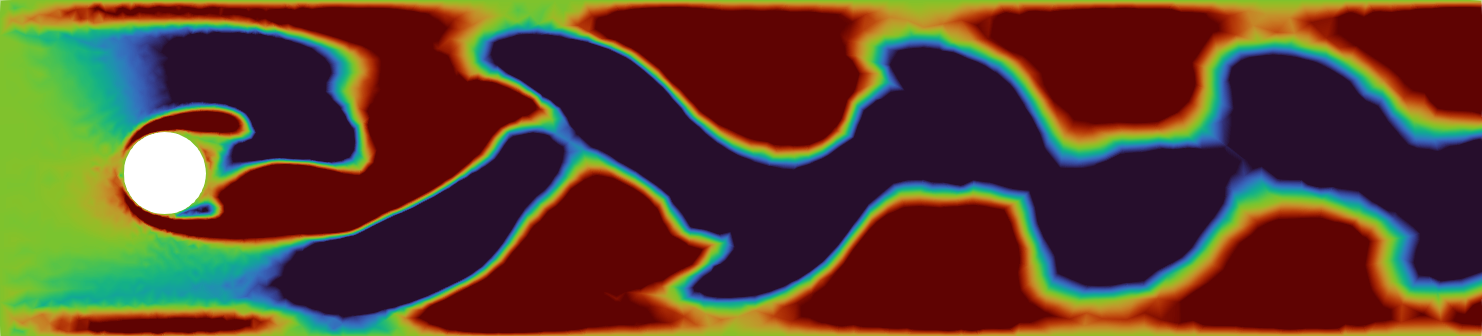}
\end{minipage} \\
(a) {\small POD+GPR, $r = 6, Re = 392$} & (b) {\small POD+GPR, $r = 6, Re =104$} \\

\begin{minipage}{0.45\linewidth}
\includegraphics[width=\linewidth]{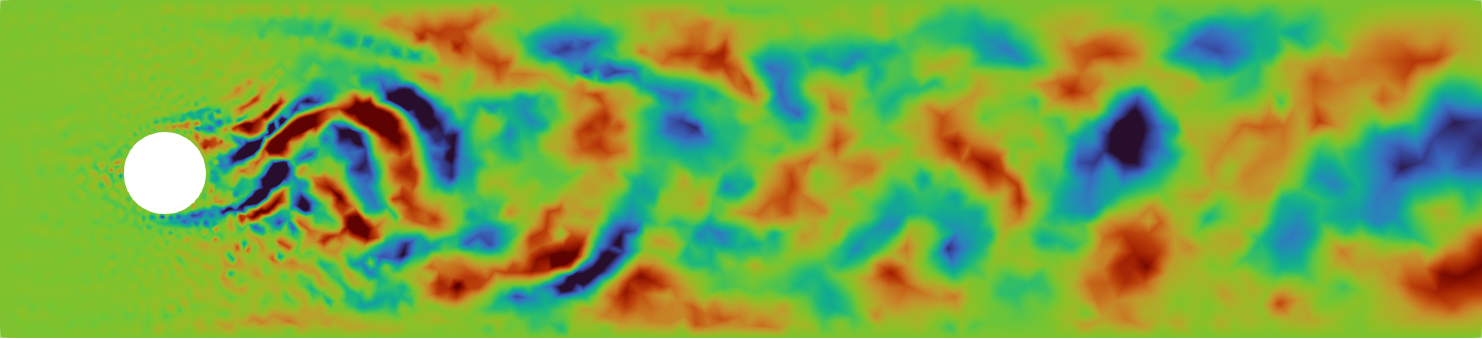}
\end{minipage} &
\begin{minipage}{0.45\linewidth}
\includegraphics[width=\linewidth]{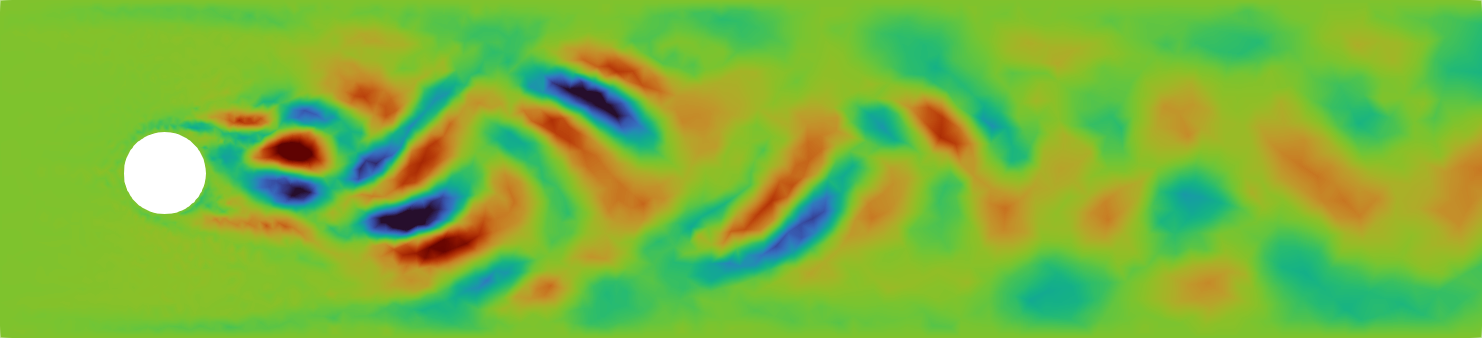}
\end{minipage} \\
(c) {\small PPMD, $r = 6, Re = 392$} & (d) {\small PPMD, $r = 6, Re = 104$} \\

\begin{minipage}{0.45\linewidth}
\includegraphics[width=\linewidth]{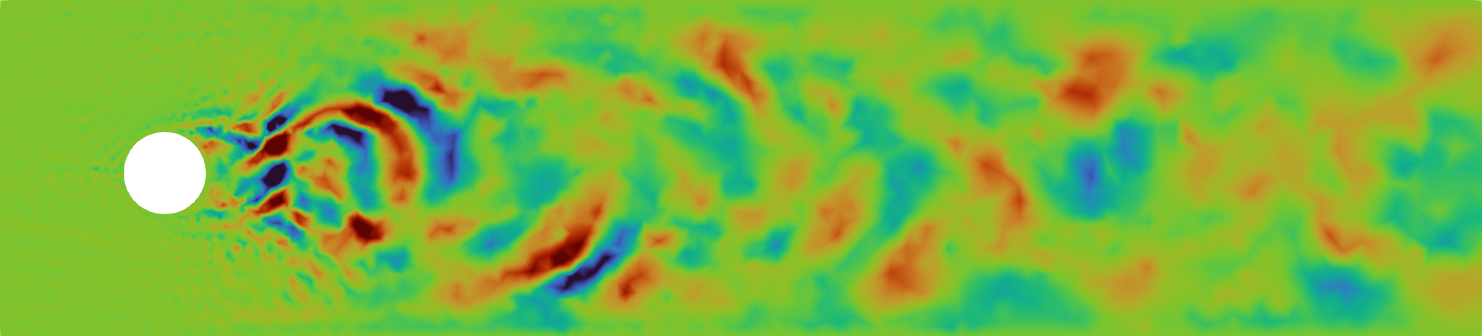}
\end{minipage} &
\begin{minipage}{0.45\linewidth}
\includegraphics[width=\linewidth]{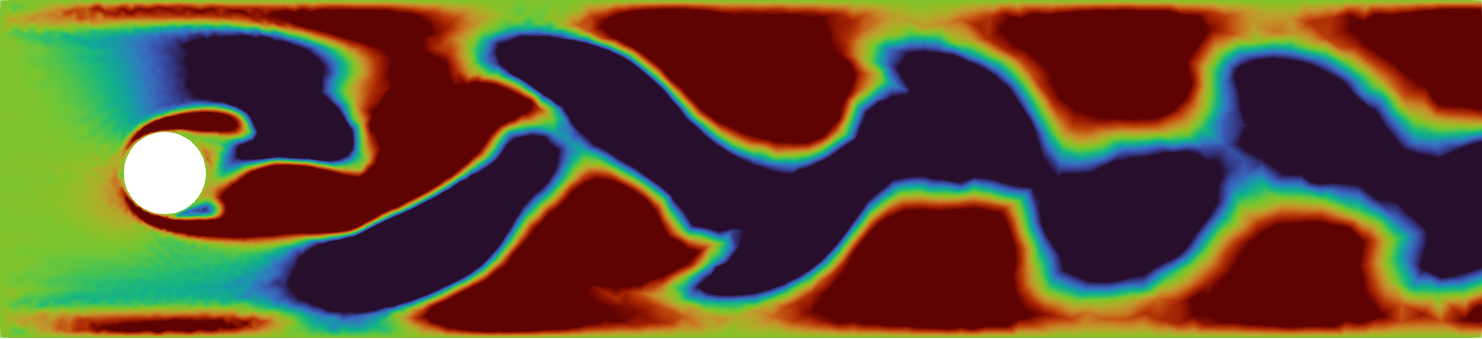}
\end{minipage} \\
(e) {\small POD+GPR, $r = 12, Re = 392$} & (f) {\small POD+GPR, $r = 12, Re = 104$} \\

\begin{minipage}{0.45\linewidth}
\includegraphics[width=\linewidth]{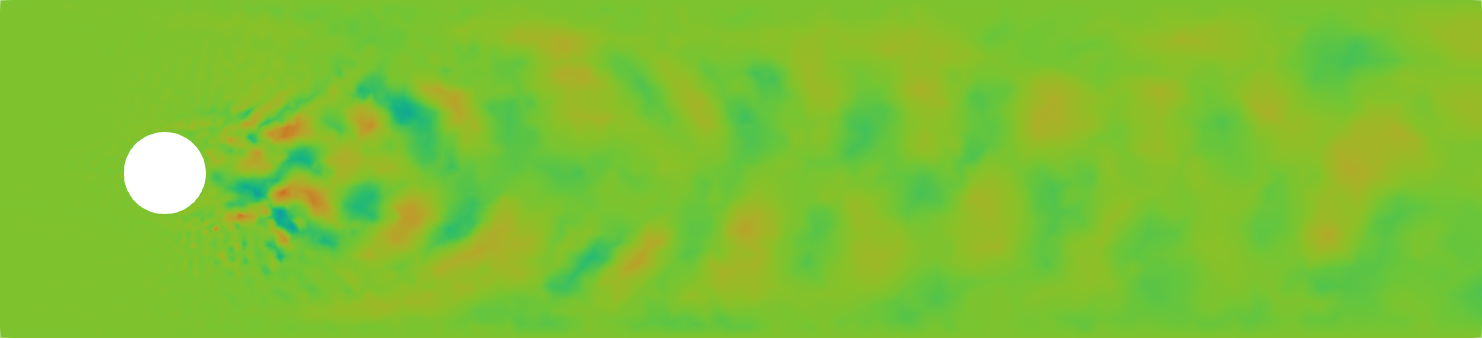}
\end{minipage} &
\begin{minipage}{0.45\linewidth}
\includegraphics[width=\linewidth]{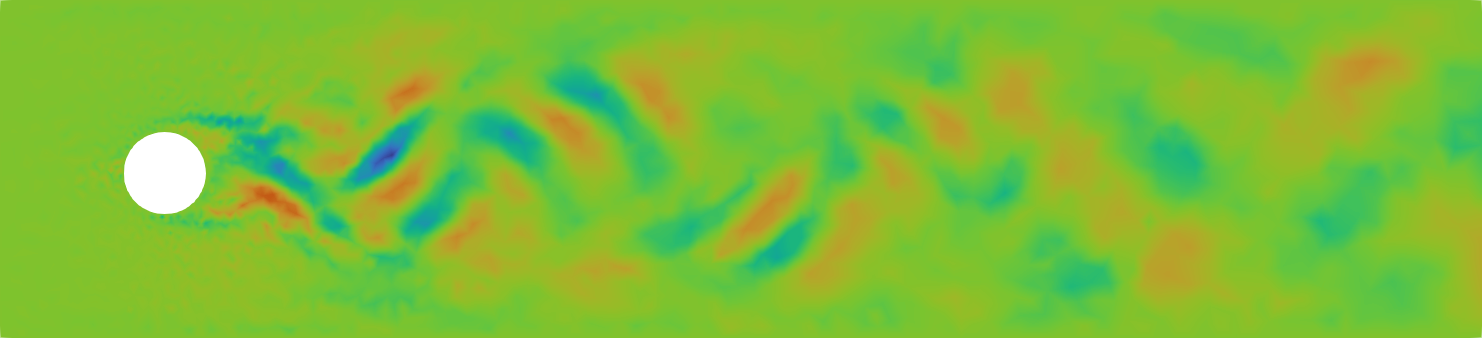}
\end{minipage} \\
(g) {\small PPMD, $r = 12, Re = 392$} & (h) {\small PPMD, $r = 12, Re = 104$} \\

\begin{minipage}{0.45\linewidth} 
\includegraphics[width = \linewidth,angle=0,clip=true]{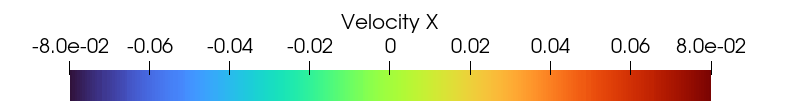}
\end{minipage}
&
\begin{minipage}{0.45\linewidth} 
\includegraphics[width = \linewidth,angle=0,clip=true]{scale4.png}
\end{minipage}\\

\end{tabular}
\caption{Flow past a cylinder test case: prediction errors at $Re=392$ and $Re=104$ at $t=15s$ from the high-fidelity full model for POD+GPR and PPMD using 6 and 12 basis functions.}
\label{fg:flowerror2}
\end{figure}

\Cref{fg:flowreconstruct} shows the velocity fields for the flow past a cylinder at \(Re=250\) and \(t=15\) s from the high-fidelity reference, POD+GPR, and PPMD using 6 and 12 basis functions. The corresponding reconstruction errors are given in \cref{fg:flowerror1}. Visually, POD+GPR with only 6 modes fails to resolve fine-scale vortex structures and wake dynamics; increasing the basis to 12 partially mitigates these deficiencies, but some nonlinear features remain smeared (see \cref{fg:flowreconstruct}(d)). By contrast, PPMD with 6 modes already captures the dominant wake topology and produces substantially lower reconstruction error than POD+GPR, even when the latter uses 12 modes. This indicates that PPMD builds more expressive, parameter-aware latent representations and therefore attains higher reconstruction fidelity with equal or fewer modes.

\begin{table}[htbp]
\centering
\caption{Flow past a cylinder case: MSE and relative error of velocity in low-rank reconstruction and parameter prediction}
\vspace{0.5em}
\renewcommand{\arraystretch}{0.7}
\setlength{\tabcolsep}{8pt}
\footnotesize

\begin{tabular}{llcccc}
\toprule
\textbf{Type} & \textbf{Parameter} & \textbf{Rank} & \textbf{Method} & \textbf{MSE} & \textbf{REL}\\
\midrule
\multirow{4}{*}{Reconstruction} & Re = 250 & 6 & POD+GPR & $1.25 \times 10^{-3}$ & $3.38 \times 10^{-2}$\\
& Re = 250 & 6 & PPMD & $1.36 \times 10^{-4}$ & $1.11 \times 10^{-2}$\\
& Re = 250 & 12 & POD+GPR & $2.36 \times 10^{-4}$ & $1.47 \times 10^{-2}$\\
& Re = 250 & 12 & PPMD & $4.93 \times 10^{-6}$ & $2.12 \times 10^{-3}$\\
\midrule
\multirow{4}{*}{Prediction} & Re = 392 & 6 & POD+GPR & $2.03 \times 10^{-3}$ & $4.32 \times 10^{-2}$\\
& Re = 392 & 6 & PPMD & $8.35 \times 10^{-4}$ & $2.76 \times 10^{-2}$\\
& Re = 392 & 12 & POD+GPR & $4.21 \times 10^{-4}$ & $1.97 \times 10^{-2}$\\
& Re = 392 & 12 & PPMD & $4.61 \times 10^{-5}$ & $6.51 \times 10^{-3}$\\
\midrule
\multirow{4}{*}{Prediction} & Re = 104 & 6 & POD+GPR & $2.71 \times 10^{-2}$ & $1.57 \times 10^{-1}$\\
& Re = 104 & 6 & PPMD & $3.23 \times 10^{-4}$ & $1.71 \times 10^{-2}$\\
& Re = 104 & 12 & POD+GPR & $2.69 \times 10^{-2}$ & $1.57 \times 10^{-1}$\\
& Re = 104 & 12 & PPMD & $9.55 \times 10^{-5}$ & $9.32 \times 10^{-3}$\\
\bottomrule
\end{tabular}
\label{tab:cylinder_velocity_mse}
\end{table}

To evaluate predictive (parametric) performance, the methods are compared at two different Reynolds numbers at \(t=15\) s: \(Re=392\) (interpolation, inside the training range) and \(Re=104\) (extrapolation, outside the training range). \Cref{fg:flowpredict} and \cref{fg:flowerror2} compare the predictive performance of POD+GPR and PPMD using 6 and 12 basis functions. As shown in \cref{fg:flowpredict}, PPMD consistently produces more accurate predictions than POD+GPR for both parameter values using the same number of basis functions. The prediction errors are shown in \cref{fg:flowerror2}. The results reveal a critical difference in the behavior of the two methods. For the in-range parameter (Re=392), increasing the number of basis functions from 6 to 12 enhances the accuracy of both POD+GPR and PPMD, although POD+GPR remains less accurate than PPMD. However, for the out-of-range parameter (Re=104), the performance of POD+GPR shows limited sensitivity to the number of basis functions; increasing the basis count only marginally reduces errors, which remain substantially larger than those of PPMD. In contrast, PPMD maintains high prediction accuracy for both in-range and out-of-range parameters as the number of basis functions increases, demonstrating its superior generalization and robustness across the parameter space.

Quantitative error metrics from Table \ref{tab:cylinder_velocity_mse} further substantiate these observations. For flow reconstruction at Re=250, PPMD with 6 basis functions achieves errors an order of magnitude lower than POD+GPR with the same rank, and even surpasses POD+GPR with 12 basis functions. At Re=392 (in-range), PPMD yields approximately 41\% of the POD+GPR error with 6 basis functions, improving to an order of magnitude lower error with 12 basis functions. The disparity is most pronounced for the out-of-range parameter Re=104, where POD+GPR exhibits large errors (MSE $\sim 2.70 \times 10^{-2}$) showing minimal improvement with increased basis functions, while PPMD maintains errors two orders of magnitude lower.

\subsection{Case 2: Backward-facing step flow}
\label{sec:bfstep_ppmd}
In this experiment, backward-facing step flow is used to demonstrate the capabilities of the PPMD. The kinematic viscosities are varying parameters in this example. 

The computational domain is defined by a backward-facing step geometry using an unstructured triangular mesh with \(N_{\text{nodes}}=5928\) nodes. The flow is governed by the incompressible Navier--Stokes equations with constant density \(\rho = 1\). The kinematic viscosity \(\mu\) is treated as a varying parameter, ranging from \(\mu_{\min}=2.0\times10^{-5}\) to \(\mu_{\max}=2.0\times10^{-3}\) with a uniform increment of \(2.0\times10^{-5}\), yielding 100 distinct parameter values. The corresponding Reynolds numbers range from approximately \(\mathrm{Re}_{\min}\approx 5.75\times10^{2}\) to \(\mathrm{Re}_{\max}\approx 5.75\times10^{4}\), where \(\mathrm{Re}=U_{\text{ref}}L_{\text{ref}}/\mu\), with a reference velocity of \(U_{\text{ref}}=2.3\) m/s and a reference length of \(L_{\text{ref}}=0.5\) m (step height).

\begin{figure}[tbhp]
\centering
\begin{tabular}{cc}
\begin{minipage}{0.45\linewidth}
\includegraphics[width=\linewidth]{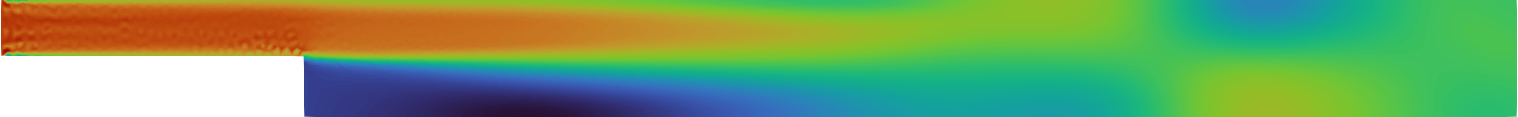}
\end{minipage} &
\begin{minipage}{0.45\linewidth}
\includegraphics[width=\linewidth]{step5e-4full.png}
\end{minipage}\\
(a) {\small Full model, $t = 25s, Re = 2300$} & (b) {\small Full model, $t = 25s, Re = 2300$} \\
\begin{minipage}{0.45\linewidth}
\includegraphics[width=\linewidth]{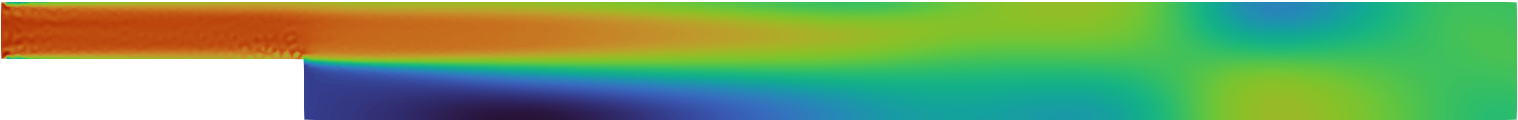}
\end{minipage} &
\begin{minipage}{0.45\linewidth}
\includegraphics[width=\linewidth]{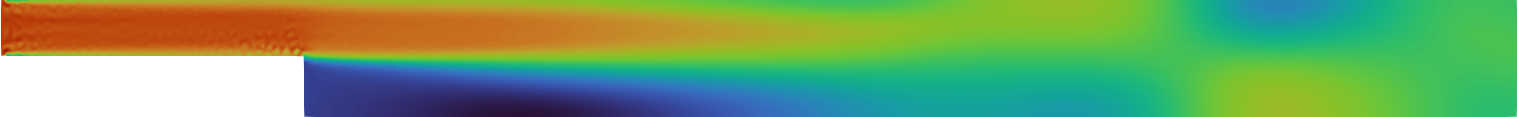}
\end{minipage} \\
(c) {\small POD+GPR, $r = 6$} & (d) {\small POD+GPR, $r = 12$} \\
\begin{minipage}{0.45\linewidth}
\includegraphics[width=\linewidth]{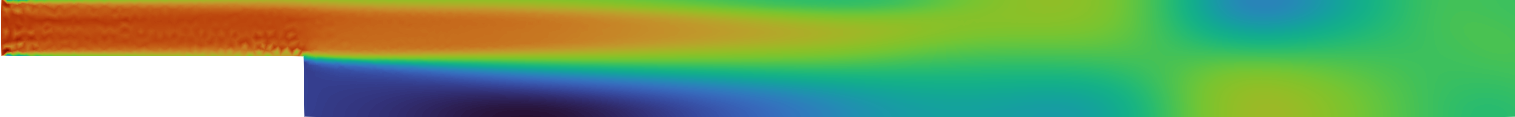}
\end{minipage} &
\begin{minipage}{0.45\linewidth}
\includegraphics[width=\linewidth]{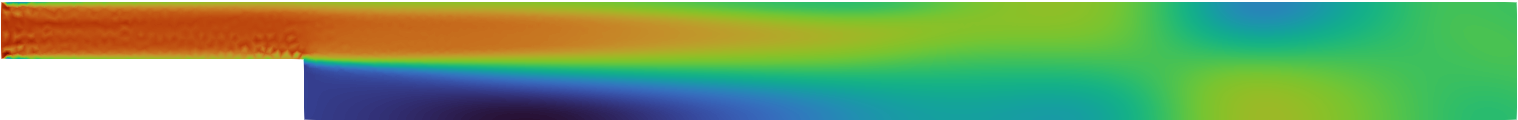}
\end{minipage} \\
(e) {\small PPMD, $r = 6$} & (f) {\small PPMD, $r = 12$} \\
\begin{minipage}{0.45\linewidth} 
\includegraphics[width = \linewidth,angle=0,clip=true]{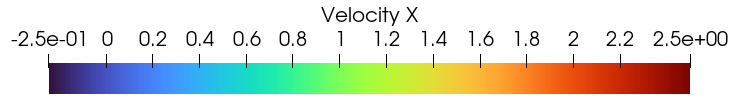}
\end{minipage}
&
\begin{minipage}{0.45\linewidth} 
\includegraphics[width = \linewidth,angle=0,clip=true]{scale6.png}
\end{minipage}\\
\end{tabular}
\caption{Backward-facing step flow at Re=2300: Comparison of reconstruction velocity solutions at $t=25s$ from the high-fidelity full model, POD+GPR, and PPMD using 6 and 12 basis functions.}
\label{fg:stepreconstruct}
\end{figure}

\begin{figure}[tbhp]
\centering
\begin{tabular}{cc}
\begin{minipage}{0.45\linewidth}
\includegraphics[width=\linewidth]{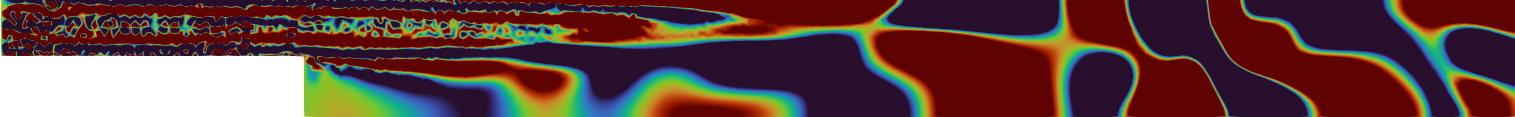}
\end{minipage} &
\begin{minipage}{0.45\linewidth}
\includegraphics[width=\linewidth]{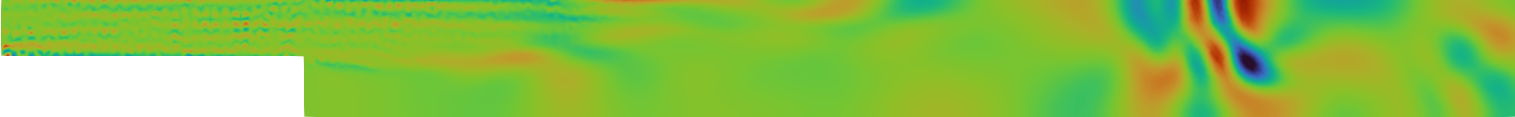}
\end{minipage} \\
(a) {\small POD+GPR, $r = 6$} & (b) {\small POD+GPR, $r = 12$} \\

\begin{minipage}{0.45\linewidth}
\includegraphics[width=\linewidth]{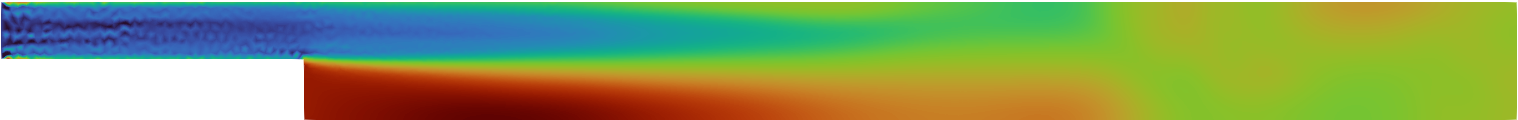}
\end{minipage} &
\begin{minipage}{0.45\linewidth}
\includegraphics[width=\linewidth]{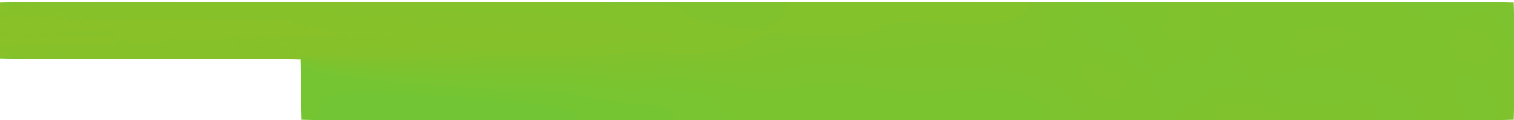}
\end{minipage} \\
(c) {\small PPMD, $r = 6$} & (d) {\small PPMD, $r = 12$} \\

\begin{minipage}{0.45\linewidth} 
\includegraphics[width = \linewidth,angle=0,clip=true]{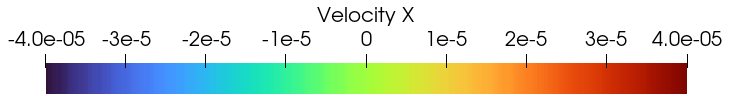}
\end{minipage}
&
\begin{minipage}{0.45\linewidth} 
\includegraphics[width = \linewidth,angle=0,clip=true]{scale5.png}
\end{minipage}\\

\end{tabular}
\caption{Backward-facing step: reconstruction errors at Re=2300 and $t=25s$ for POD+GPR and PPMD using 6 and 12 basis functions.}
\label{fg:steperror}
\end{figure}

Boundary conditions follow the standard backward-facing step configuration: a prescribed horizontal inflow velocity with a maximum \(U_{\text{ref}}=2.3\) m/s, no-slip conditions on all solid walls, and a zero-normal gradient with reference pressure at the outflow. Initial conditions for velocity and pressure are set to zero.

For each parameter value, the system is evolved over the time interval \(t\in[0,25]\) seconds with a fixed time step \(\Delta t=1.0\) s, resulting in 25 snapshots per simulation. The full dataset thus comprises 100 parameter samples, each with 26 snapshots. For training the PPMD model, the first 90 parameter values and their associated snapshots are used. The remaining 10 parameter values are reserved for out-of-sample prediction and error assessment. Additionally, a cubic smoothing spline is fitted to the 100 parameter-snapshot sets to obtain a continuous representation of the solution over the entire parameter interval, enabling reconstruction and prediction at arbitrary parameter values within the range.

\begin{figure}[tbhp]
\centering
\begin{tabular}{cc}
\begin{minipage}{0.45\linewidth}
\includegraphics[width=\linewidth]{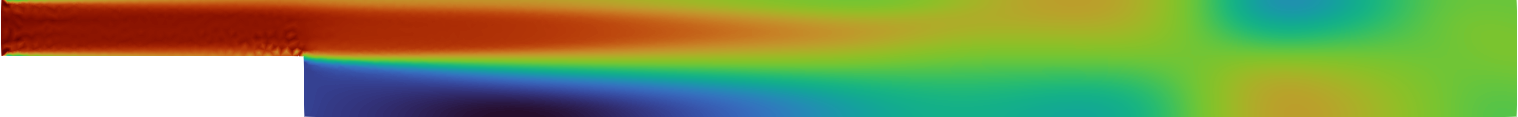}
\end{minipage} &
\begin{minipage}{0.45\linewidth}
\includegraphics[width=\linewidth]{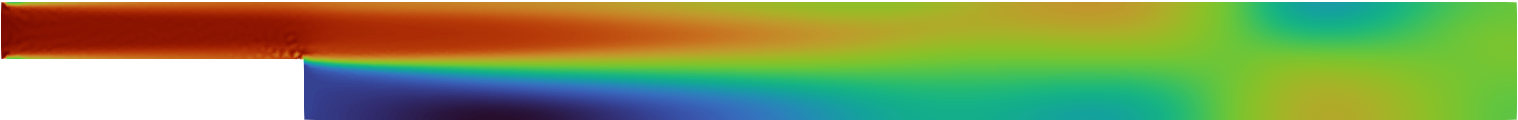}
\end{minipage} \\
(a) {\small Full model, $t = 25s, Re = 1139$} & (b) {\small Full model, $t = 25s, Re = 578$} \\

\begin{minipage}{0.45\linewidth}
\includegraphics[width=\linewidth]{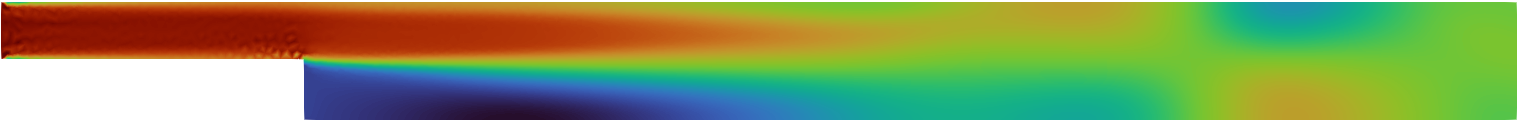}
\end{minipage} &
\begin{minipage}{0.45\linewidth}
\includegraphics[width=\linewidth]{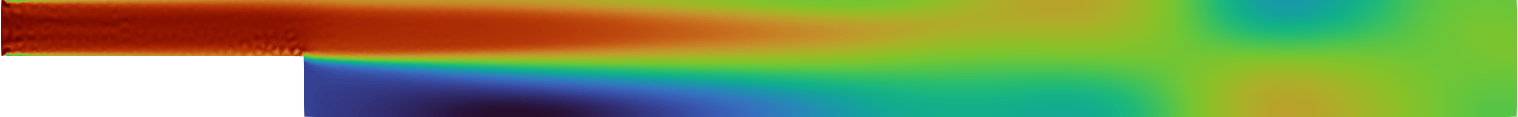}
\end{minipage} \\
(c) {\small POD+GPR, $r = 6, Re = 1139$} & (d) {\small POD+GPR, $r = 6, Re = 578$} \\

\begin{minipage}{0.45\linewidth}
\includegraphics[width=\linewidth]{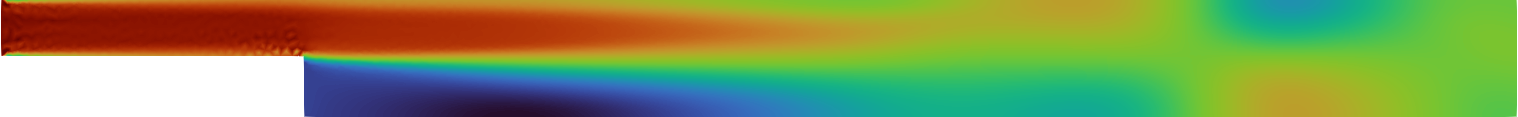}
\end{minipage} &
\begin{minipage}{0.45\linewidth}
\includegraphics[width=\linewidth]{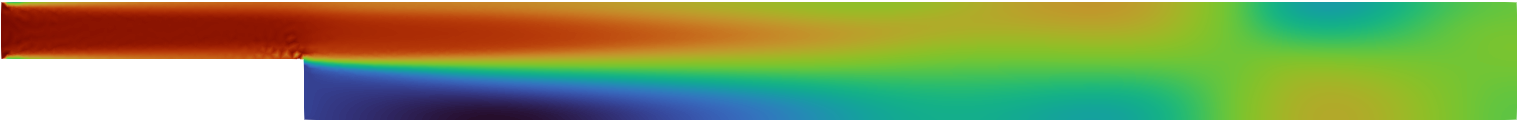}
\end{minipage} \\
(e) {\small PPMD, $r = 6, Re = 1139$} & (f) {\small PPMD, $r = 6, Re = 578$} \\

\begin{minipage}{0.45\linewidth}
\includegraphics[width=\linewidth]{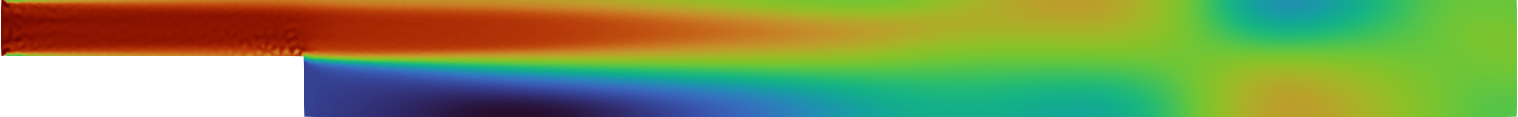}
\end{minipage} &
\begin{minipage}{0.45\linewidth}
\includegraphics[width=\linewidth]{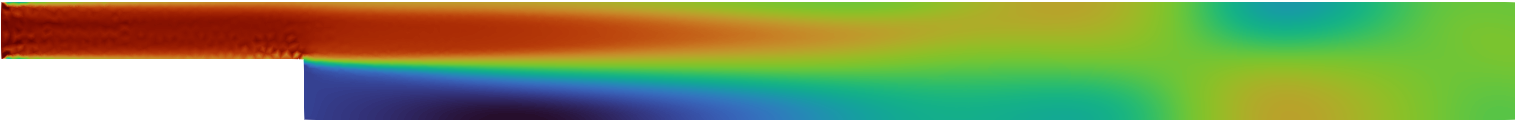}
\end{minipage} \\
(g) {\small POD+GPR, $r = 12, Re = 1139$} & (h) {\small POD+GPR, $r = 12, Re = 578$} \\

\begin{minipage}{0.45\linewidth}
\includegraphics[width=\linewidth]{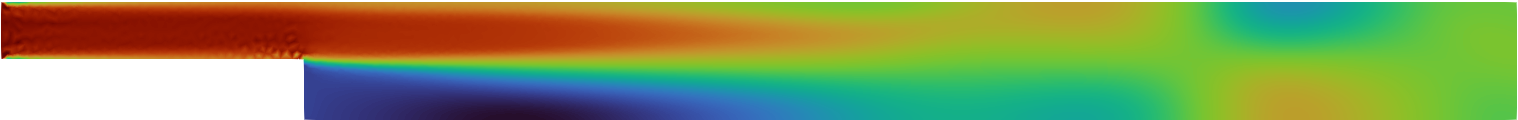}
\end{minipage} &
\begin{minipage}{0.45\linewidth}
\includegraphics[width=\linewidth]{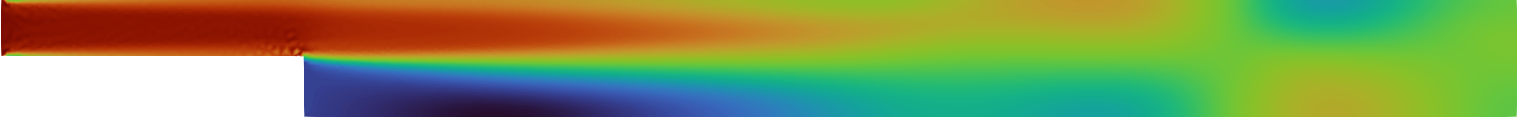}
\end{minipage} \\
(i) {\small PPMD, $r = 12, Re = 1139$} & (j) {\small PPMD, $r = 12, Re = 578$} \\

\begin{minipage}{0.45\linewidth} 
\includegraphics[width = \linewidth,angle=0,clip=true]{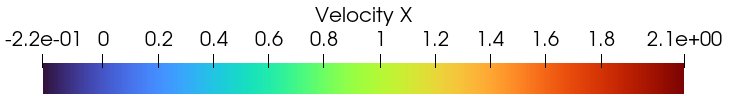}
\end{minipage}
&
\begin{minipage}{0.45\linewidth} 
\includegraphics[width = \linewidth,angle=0,clip=true]{scale8.png}
\end{minipage}\\

\end{tabular}
\caption{Backward-facing step: velocity solutions at $Re=1139$ and $Re=578$ at $t=25s$ from the high-fidelity full model, POD+GPR, and PPMD using 6 and 12 basis functions.}
\label{fg:steppredict}
\end{figure}

\begin{figure}[tbhp]
\centering
\begin{tabular}{cc}
\begin{minipage}{0.45\linewidth}
\includegraphics[width=\linewidth]{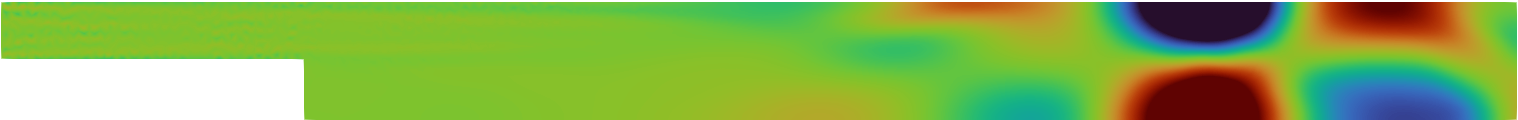}
\end{minipage} &
\begin{minipage}{0.45\linewidth}
\includegraphics[width=\linewidth]{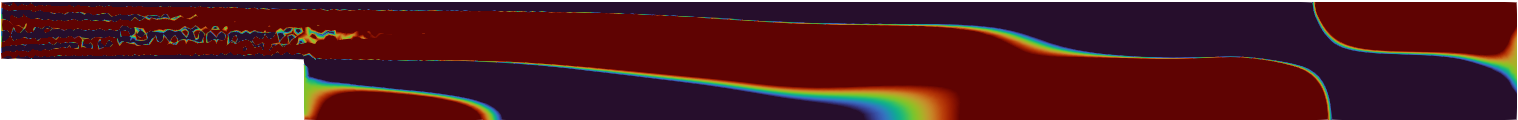}
\end{minipage} \\
(a) {\small POD+GPR, $r = 6, Re = 1139$} & (b) {\small POD+GPR, $r = 6, Re =578$} \\

\begin{minipage}{0.45\linewidth}
\includegraphics[width=\linewidth]{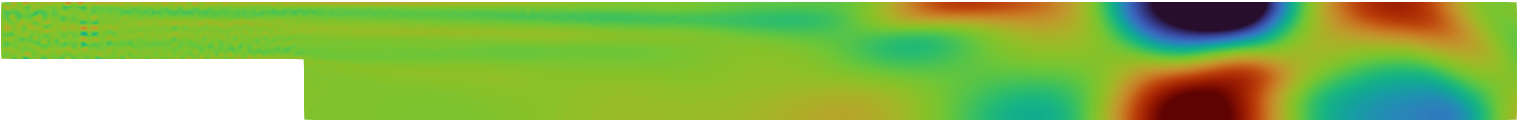}
\end{minipage} &
\begin{minipage}{0.45\linewidth}
\includegraphics[width=\linewidth]{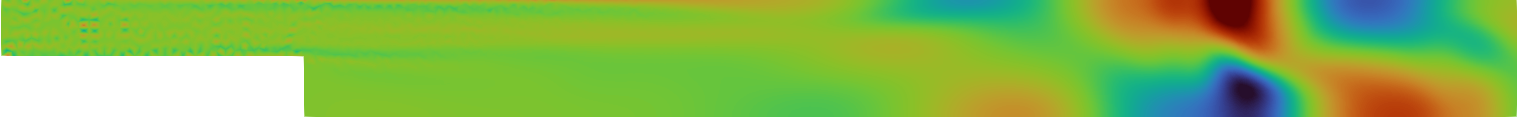}
\end{minipage} \\
(c) {\small PPMD, $r = 6, Re = 1139$} & (d) {\small PPMD, $r = 6, Re = 578$} \\

\begin{minipage}{0.45\linewidth}
\includegraphics[width=\linewidth]{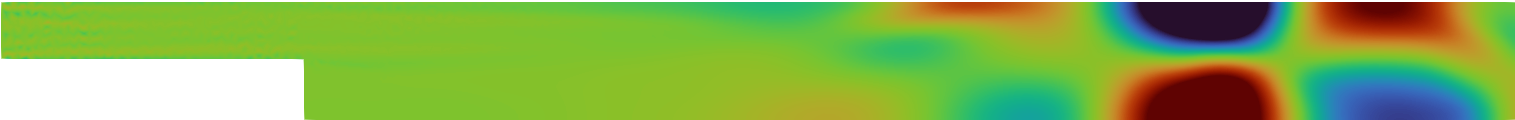}
\end{minipage} &
\begin{minipage}{0.45\linewidth}
\includegraphics[width=\linewidth]{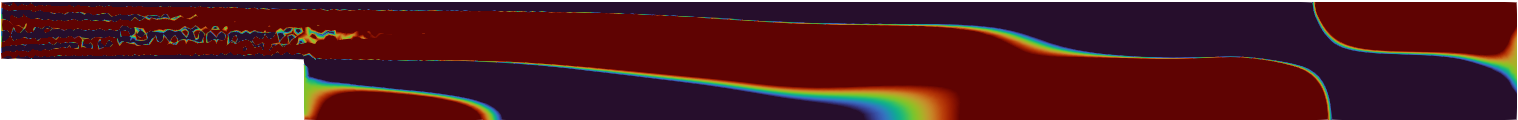}
\end{minipage} \\
(e) {\small POD+GPR, $r = 12, Re = 1139$} & (f) {\small POD+GPR, $r = 12, Re = 578$} \\

\begin{minipage}{0.45\linewidth}
\includegraphics[width=\linewidth]{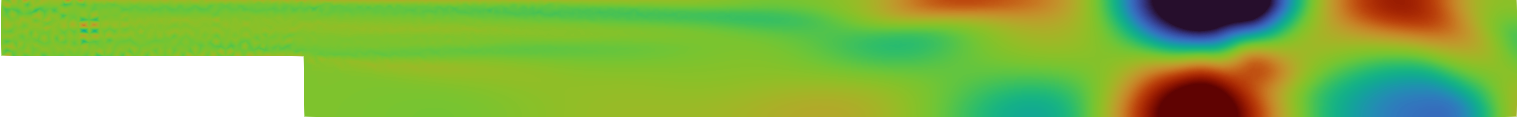}
\end{minipage} &
\begin{minipage}{0.45\linewidth}
\includegraphics[width=\linewidth]{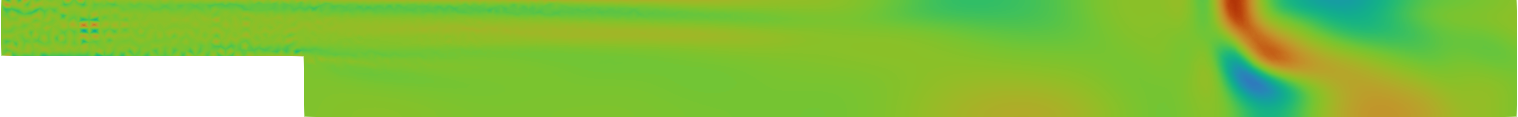}
\end{minipage} \\
(g) {\small PPMD, $r = 12, Re = 1139$} & (h) {\small PPMD, $r = 12, Re = 578$} \\

\begin{minipage}{0.45\linewidth} 
\includegraphics[width = \linewidth,angle=0,clip=true]{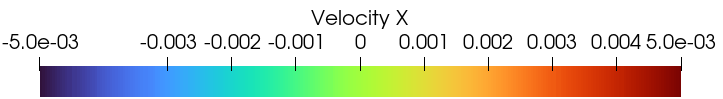}
\end{minipage}
&
\begin{minipage}{0.45\linewidth} 
\includegraphics[width = \linewidth,angle=0,clip=true]{scale7.png}
\end{minipage}\\

\end{tabular}
\caption{Backward-facing step test case: prediction errors at $Re=1139$ and $Re=578$ at $t=25s$ from the high-fidelity full model for POD+GPR and PPMD using 6 and 12 basis functions.}
\label{fg:steperror2}
\end{figure}

\Cref{fg:stepreconstruct} shows the velocity fields for the 2D backward-facing step at \(t=25\) s and \(Re=2300\) from the high-fidelity model, POD+GPR, and PPMD using 6 and 12 bases. The reconstruction errors are shown in \cref{fg:steperror}. POD+GPR with 6 modes under-resolves the separated shear layer and downstream recirculation, and although 12 modes reduce these deficiencies, some nonlinear features remain smeared. PPMD with 6 modes, by contrast, captures the main wake topology and near-step dynamics more faithfully and attains a lower reconstruction error than POD+GPR (even with 12 modes).

To assess parametric extrapolation and interpolation performance, two prediction cases (\(t=25\) s) \(Re=1139\) (in-range) and \(Re=578\) (out-of-range) are considered. The results are shown in \Cref{fg:steppredict} and \cref{fg:steperror2}. In the in-range case, both methods perform comparably with 6 modes. The pointwise maximum error is already small (\(\mathcal{O}(5\times10^{-3})\)); thus, increasing to 12 modes yields only marginal improvement. For the out-of-range case, PPMD markedly outperforms POD+GPR; POD+GPR shows little sensitivity to added modes and fails to recover key wake features, whereas PPMD improves noticeably as the modal dimension increases. Overall, PPMD generalizes more robustly across the parameter space and benefits more from increased modal resolution.

\section{Conclusions}  \label{sec:summary}
\emph{Parametric Probabilistic Manifold Decomposition} (PPMD), an extension of the PMD paradigm tailored to parameter-dependent systems, is introduced. The two-stage spirit of PMD (linear reduction + nonlinear residual modeling) is retained, but two key departures are made. First, snapshots are assembled by concatenating full time-resolved trajectories across parameter samples, so that parametric variation is modeled jointly with temporal structure, rather than with time treated as the sole ordering variable. Second, a recursive, adaptive KRR scheme is introduced to augment latent samples and improve local coverage in parameter space; subsequently, discrete latent samples are converted into differentiable, continuous parameter-to-latent maps by weighted smoothing splines. Continuous-parameter surrogates with improved generalization and controllable error sources are thereby yielded.

Numerical experiments on two canonical CFD benchmarks (flow past a cylinder and the backward-facing step) demonstrate that PPMD attains competitive reconstruction accuracy while producing continuous, interpretable parametric surrogates with lower training complexity and clearer modular error control.  Unlike purely black-box deep models, PPMD provides explicit linear and nonlinear components that aid in diagnosis and calibration; in addition, its recursive KRR augmentation and spline extension improve pointwise predictions at unseen parameter values.  

The principal limitations remain the representational capacity of the chosen kernel RKHS and the computational cost of kernel operations; practical remedies include richer kernels, low-rank local KRR approximations, and adaptive sampling guided by uncertainty estimates.  Future work will target physics-aware lifting, uncertainty-driven adaptive sampling, and large-scale benchmarks (multi-phase flows, urban ocean models) to further evaluate scalability and robustness.

\section{ Acknowledgments}

The authors acknowledge the support of the Top Discipline Plan of Shanghai Universities-Class I and Shanghai Municipal Science and Technology Major Project (No. 2021SHZDZX0100), National Key $R\&D$ Program of China(NO.2022YFE0208000, 2024YFC2816400, and 2024YFC2816401), Shanghai Engineering Research Center(No. 19DZ2255100) and the Shanghai Institute of Intelligent Science and Technology, Tongji University.	

\bibliographystyle{siamplain}
\bibliography{references}
\end{document}